\newcommand{\N}{\mathbb{N}}
\newcommand{\Z}{\mathbb{Z}}
\newcommand{\Q}{\mathbb{Q}}
\newcommand{\C}{\mathbb{C}}
\newcommand{\F}{\mathbb{F}}
\renewcommand{\P}{\mathbb{P}} 
\newcommand{\G}{\Gamma} 
\newcommand{\T}{\mathbf{T}} 
\newcommand{\U}{\mathbf{U}}
\newcommand{\Tr}{\mathcal{T}} 
\renewcommand{\c}{\mathbf{c}} 
\newcommand{\I}{\mathcal{I}} 
\newcommand{\g}{\gamma}
\renewcommand{\a}{\alpha}
\renewcommand{\b}{\beta}
\renewcommand{\l}{\ell}
\newcommand{\e}{\overline{e}}
\renewcommand{\O}{\Omega}
\renewcommand{\o}{\omega}
\newcommand{\Hom}{\mathrm{Hom}}
\newcommand{\mf}{\,|_{k,m}}
\newtheorem{thm}{Theorem}[section]
\newtheorem{prop}[thm]{Proposition}
\newtheorem{lem}[thm]{Lemma}
\newtheorem{cor}[thm]{Corollary}
\newtheorem{defin}[thm]{Definition}
\newtheorem{rem}[thm]{Remark}
\newtheorem{exe}[thm]{Example}
\newtheorem{conj}[thm]{Conjecture}
\newtheorem{conjs}[thm]{Conjectures}
\renewcommand{\matrix}[4]{\left(\begin{array}{cc} {#1} & {#2} \\ {#3} & {#4} \end{array}\right)}
\newcommand{\smatrix}[4]{\left(\begin{smallmatrix} {#1} & {#2} \\ {#3} & {#4} \end{smallmatrix}\right)}
\title[Slopes of Drinfeld cusp forms]
{On the structure and slopes of Drinfeld cusp forms}
\author[Andrea Bandini]{Andrea Bandini$^*$}
\author[Maria Valentino]{Maria Valentino$^\dag$}\thanks{M. Valentino has been supported  by an ``Ing. G. Schirillo'' fellowship of INdAM}
\date{%
    $^*$Univesit\`a di Pisa, Dipartimento di Matematica, Largo Bruno Pontecorvo 5, 56127 Pisa, Italy \\%
    $^\dag$Universit\`a degli Studi di Parma, Dipartimento di Scienze Matematiche Fisiche e Informatiche, Parco Area delle Scienze, 53/A, 43124 Parma, Italy%
}
\begin{document}

\begin{abstract} We define oldforms and newforms for Drinfeld  cusp forms of level $t$ and conjecture
that their direct sum is the whole space of cusp forms. Moreover we describe explicitly the matrix $U$ associated to the 
action of the Atkin operator $\U_t$ on cusp forms of level $t$ and use it to compute tables of slopes of eigenforms.
Building on such data, we formulate conjectures on bounds for slopes, on the diagonalizability of $\U_t$ and on various 
other issues. Via the explicit form of the matrix $U$ we are then able to verify our conjectures in various cases
(mainly in small weights).
\end{abstract}

\maketitle

\section{introduction}

Let $N,k\in\Z_{\geqslant 0}$ and denote by $S_k(N)$ the $\C$-vector space of cuspidal modular forms of level $N$ and weight $k$.
Hecke operators $T_n$, $n\geqslant 1$, are defined on $S_k(N)$ and when a prime $p\in \Z$ divides the level $N$, $T_p$ is also known 
as the {\em Atkin}, or {\em Atkin-Lehner $U_p$-operator}.

A major topic in number theory is the construction of {\em families} of modular/cuspidal  forms and there are a number of related questions and conjectures about the 
{\em slopes} of such functions (e.g. bounds and recurring patterns for slopes). We recall that the $p$-slope of an eigenform, i.e. of a simultaneous eigenvector for all Hecke operators, is 
defined to be the $p$-adic valuation of its $U_p$-eigenvalue; in particular, an eigenform of $p$-slope zero is called {\em $p$-ordinary}.\\
The pioneer for the subject was Serre who, after developing the notion of a $p$-adic modular form, in \cite{Se1} presented the first $
p$-adic analytic family of modular eigenforms: the family of $p$-adic {\em Eisenstein series}.\\
A step further was then moved by Hida who provided a larger class of families of modular forms in the paper \cite{H2} and also studied $p$-adic analytic families of Galois representation 
attached to ordinary modular eigenforms in \cite{H1}.\\
Finally, we have the work of Coleman on {\em overconvergent} modular forms \cite{Co}: by proving that overconvergent modular forms of small slope 
(note that Coleman removed the restriction on ordinary eigenforms used by Hida) are classical, he found plenty
of $p$-adic families of classical modular forms. \\
In order to complete the picture, let us mention the article \cite{GM1} by Gouv\^ea and Mazur. In this paper, based on extensive numerical evidences, 
they asked some questions and stated a variety of conjectures on slopes and on the existence of families of modular forms.  In particular, 
they conjectured the generalization of Hida's theory to modular eigenforms of finite slope. It was this work that inspired
Coleman and motivated his search for the overconvergent families. However, we must mention that a counterexample to 
\cite[Conjecture 1]{GM1} was found by Buzzard and Calegari in \cite{BC}.

The interest of researchers for $U_p$ eigenvalues is not limited to slopes and families only: many related questions about diagonalizability
of $U_p$ and about the structure of $S_k(N)$ have been studied through the years and are well known in the case of number fields but, 
to our knowledge, have no counterpart yet in the function field setting. For example, regarding the diagonalizability of Hecke operators,
when $p$ is a prime number not dividing $N$, the action of all $T_p$ is semisimple on cusp forms.
This is no longer true for the action of $U_p$ on $S_k(Np)$, which fails to be diagonalizable. 
Some results on its semisimplicity are obtained in \cite{CE}. Moreover, the space of cusp forms of level $N$ and weight $k$ is direct sum 
of {\em newforms} and {\em oldforms}, which are mutually orthogonal with respect the Petersson inner product. It has been proved  
(see \cite{GM1}) that, for a fixed prime $p$, eigenvalues of old eigenforms have $p$-slope less than or equal to $k-1$; while new eigenforms 
all have slopes equal to $1-\frac{k}{2}$. Both results rely on Petersson inner product, a tool which is no longer avaliable for function
fields of characteristic $p$.\\

The present paper deals with the function field counterpart of (some of) the results mentioned above. The theory of modular forms for function  fields began with Drinfeld, indeed they were named
Drinfeld modular forms after him, but basic notions and definitions were actually introduced
only in the eighties by Goss and Gekeler (see, e.g. \cite{G1}, \cite{G2}, \cite{Go1} and \cite{Go2}).\\
For the sake of completeness we point out that Drinfeld modular forms are only half of the
story. In the realm of function fields there is another translation of classical modular forms:
the so called automorphic forms. They are functions on adelic groups with values in fields of
characteristic zero, but for the purpose of this paper we are only going to consider Drinfeld
modular forms which have values in a field of positive characteristic $p$.

Let $K=\F_q(t)$, where $q$ is a power of a fixed prime $p\in\Z$, and denote by
$A:=\F_q[t]$ its ring of integers (with respect to the prime at infinity $\frac{1}{t}$). Let $K_\infty$ be the completion of $K$ at 
$\infty:=\frac{1}{t}$ with ring of integers $A_\infty$ and denote by $\C_\infty$ the completion of an algebraic closure of $K_\infty$.\\
The finite dimensional $\C_\infty$-vector space of Drinfeld modular forms (more details on the objects mentioned in this introduction are in 
Section \ref{SecSet}) of weight $k\geqslant 0$ and type $m\in \Z$ for a congruence subgroup $\G<GL_2(A)$ is denoted by $M_{k,m}(\G)$. 
The corresponding space of cusp forms is indicated by $S^1_{k,m}(\G)$. If $\Gamma$ is the full $GL_2(A)$, in analogy with the classical case, 
we will refer to the related space of Drinfeld forms as forms of {\em level one}. 

In two recent papers \cite{BV1} and \cite{BV2}, we studied an analogue of the classical Atkin $U_p$-operator for a prime (hence any prime) of degree 1, i.e. the operator
$\U_t$, acting on the spaces $S^1_{k,m}(\Gamma_1(t))$ and $S^1_{k,m}(\Gamma_0(t))$.
In particular, we found an explicit formula for the action of $\U_t$ on $S^1_{k,m}(\Gamma_1(t))$, and here we shall use the 
matrix arising from that formula to deal with various issues like the structure of cusp forms of level $t$, diagonalizability and slopes of $\U_t$. We also started a computational search 
on eigenvalues and $t$-slopes of Atkin operators, looking for regularities and patterns in the distribution of $t$-slopes. 
The outcome of these computations are collected in tables  that can be downloaded from 
https://sites.google.com/site/mariavalentino84/publications.
Building on such tables and various other data, in the final section of \cite{BV2} we formulated some conjectures on slopes (e.g.
an analogue of Gouv\^ea-Mazur conjecture, see \cite[Conjecture 5.1]{BV2}) and on related issues. The aim of the present work is to 
explain these conjectures and their relations with structural issues of cusp forms spaces and also to give proofs in some special cases.\\
Regarding families in our setting, we would like to mention that Eisenstein series in positive characteristic were used by C. Vincent, together 
with trace and norm maps, to build examples of $\mathfrak{p}$-adic modular forms ($\mathfrak{p}$ a prime of $A$), i.e. families
of Drinfeld modular forms whose coefficients converge $\mathfrak{p}$-adically, see \cite[Definition 2.5 and Theorem 4.1]{Vi}. 
The analogue of Serre's families has been constructed by D. Goss in \cite{Go3}, using the $A$-expansion described by A. Petrov in \cite{Pe}. 
Moreover,  other progresses in the construction of more general 
families of Drinfeld modular forms have recently been achieved by S. Hattori in \cite{Ha2}, using the matrices we provide in 
Sections \ref{SecSymmetry} and \ref{SecMatrices} and building on his previous results on the analogue of Gouv\^ea-Mazur
Conjecture in \cite{Ha}, and by M.-H. Nicole and G. Rosso in \cite{NR}, employing a more geometric approach.\\

The main issues treated here are the following.
\begin{enumerate}
\item \underline{Injectivity of the Hecke operator}. We believe that the Hecke operator 
$\T_t$ acting on the space of cusp forms $S^1_{k,m}(GL_2(A))$ is injective for any weight $k$.

\item \underline{Diagonalizability of Hecke operators}. Inseparable eigenvalues occur both in level 1 and in level $t$
leading to non diagonalizable operators. Anyway we believe there is a more structural motivation for this (i.e.
the antidiagonal action of $\U_t$ on newforms, see Section \ref{SecAntidiagonalNewforms}), which causes
non diagonalizability in even characteristic only.

\item \underline{Newforms and Oldforms}. In \cite{BV2}, we defined two degeneracy maps 
$\delta_1,\delta_t: S^1_{k,m}(GL_2(A))\to S^1_{k,m}(\Gamma_0(t))$ and two trace 
maps (the other way around) to describe the subspaces of $S^1_{k,m}(\Gamma_0(t))$  composed by newforms and oldforms
denoted, respectively, by $S^{1,new}_{k,m}(\Gamma_0(t))$ and $S^{1,old}_{k,m}(\Gamma_0(t))$. We believe these
definition will provide a decomposition of $S^1_{k,m}(\Gamma_0(t))$ as the direct sum of oldforms and newforms. Section \ref{SecSpecial} will provide evidence for the conjecture and a computational criterion for it.

\item \underline{Bounds on slopes}. It is easy to find a lower bound for slopes (see Proposition \ref{SmallestSlope}), we believe 
an upper bound is $\frac{k}{2}$ (such bound will have consequences also on the previous issues, see Remark \ref{RemTF} for details). The current upper bound of Theorem 
\ref{ThmUpperBSlopes} is unfortunately still quite far from it. 

\end{enumerate}

The paper is organized as follows. 
In Section \ref{SecSet}, we fix notations  and recall the main results from \cite{BV2} that we are going to use throughout the paper.
Moreover, we formulate the conjectures we shall work on in the subsequent sections.

\begin{conjs}[Conjecture \ref{OldConj}]\
\begin{enumerate}
\item[{\bf 1.}] $Ker(\T_t)=0$;
\item[{\bf 2.}] $\U_t$ is diagonalizable when $q$ is odd and, when $q$ is even, it is diagonalizable if and only if the dimension of
$S^{1,new}_{k,m}(\Gamma_0(t))$ is 1;
\item[{\bf 3.}] $S^1_{k,m}(\Gamma_0(t))=S^{1,old}_{k,m}(\Gamma_0(t))\oplus S^{1,new}_{k,m}(\Gamma_0(t))$.
\end{enumerate}
\end{conjs}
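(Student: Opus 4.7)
The plan is to produce evidence in special cases and to exhibit the interrelations among the three parts, since these are conjectures rather than full theorems. The central tool throughout is the explicit matrix $U$ for $\U_t$ on $S^1_{k,m}(\Gamma_1(t))$ derived in \cite{BV2} and developed further in Sections \ref{SecSymmetry} and \ref{SecMatrices}. All three statements will be reduced to (or verified by) computations with this matrix in prescribed weight ranges, so the first step is to assemble the matrix in a form amenable to symbolic computation over $\C_\infty$ and to isolate its block structure with respect to the decomposition into oldform and antidiagonal newform pieces.

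For part (1), the approach is to exploit the Drinfeld analogue of the classical identity relating $\U_t\circ\delta_1$, $\delta_1\circ\T_t$, and $\delta_t$, which can be read off directly from the coset decompositions. Combined with the trace map in the opposite direction, the composition $\mathrm{tr}\circ\delta_1$ becomes a polynomial expression in $\T_t$ plus a scalar term; a nontrivial kernel of $\T_t$ would therefore force unexpected degenerations on the old subspace, visible in the computed matrix. In small weights this can also be attacked directly by writing $\T_t$ in a standard basis of $S^1_{k,m}(GL_2(A))$ and checking that the determinant is a nonzero element of $\C_\infty$.

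For part (3), introduce $\Psi: S^1_{k,m}(GL_2(A))^2\to S^1_{k,m}(\Gamma_0(t))$ by $(f_1,f_2)\mapsto\delta_1(f_1)+\delta_t(f_2)$; its image is by definition $S^{1,old}_{k,m}(\Gamma_0(t))$. Injectivity of $\Psi$ reduces, via the trace map, to non-degeneracy of a $2\times 2$ matrix whose entries are polynomials in $\T_t$, hence ultimately to part (1). A complementary subspace is then supplied by $S^{1,new}_{k,m}(\Gamma_0(t))$, defined as the kernel of the traces, and the dimensions match exactly when $\Psi$ is injective; this packages (1) and (3) as closely linked statements.

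For part (2), diagonalize the explicit matrix $U$ in small weights. In odd characteristic one expects the roots of its characteristic polynomial to be pairwise distinct over a suitable extension, giving diagonalizability for free; to prove this one computes the discriminant and verifies its non-vanishing on the cases at hand. In even characteristic the situation is controlled by the antidiagonal block on newforms from Section \ref{SecAntidiagonalNewforms}: a $1\times 1$ antidiagonal block is automatically diagonalizable, while a larger antidiagonal block $\smatrix{0}{a}{b}{0}$ with $a=b$ in characteristic $2$ produces an inseparable repeated eigenvalue obstructing diagonalization. The main obstacle, common to all three parts, is the absence of a Petersson-type pairing in positive characteristic, so the familiar orthogonal decomposition into newforms and oldforms is unavailable; consequently a uniform proof for all $(k,m)$ seems out of reach without either the conjectural upper slope bound $k/2$ (see Remark \ref{RemTF}) or a sharper grip on the antidiagonal action, and the present plan is limited to verification in finitely many weights.
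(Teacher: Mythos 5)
Your overall strategy --- assemble the explicit matrix for $\U_t$, translate the three statements into linear algebra, and verify them block by block in bounded weight --- is the same as the paper's (Sections \ref{SecSpBlocks}--\ref{SecSpecial}), and your diagnosis of part ({\bf 2}) in even characteristic via the antidiagonal newform block agrees with Section \ref{SecAntidiagonalNewforms}. However, your claimed reduction of part ({\bf 3}) to part ({\bf 1}) has a genuine gap. The map $\delta$ (your $\Psi$) is injective unconditionally (\cite[Proposition 3.1]{BV2}, recalled in Section \ref{SecSet}), so injectivity is never the issue, and knowing it does not ``make the dimensions match'': $\dim S^{1,new}_{k,m}(\Gamma_0(t))$ is not a priori complementary to $\dim S^{1,old}_{k,m}(\Gamma_0(t))$, since $Ker(Tr)$ already contains the old eigenforms $\delta(\varphi,-\tfrac{t^{k-m}}{\lambda}\varphi)$, which is exactly why the paper intersects with $Ker(Tr')$ to define newforms. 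The paper's actual criterion (Theorem \ref{ThmSum}) is the invertibility of $I-t^{-k}(TF)^2$, which by Remark \ref{RemTF} is equivalent to the nonexistence of a level-one $\T_t$-eigenform with eigenvalue $\pm t^{k/2}$; thus part ({\bf 3}) is governed by the conjectural slope bound $k/2$, not by $Ker(\T_t)=0$, and no argument of the form ``(1) implies (3)'' is available.

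There is a second gap in your plan for part ({\bf 2}) in odd characteristic: diagonalizability cannot come ``for free'' from pairwise distinct eigenvalues certified by a nonvanishing discriminant, because the eigenvalues are very much repeated. All newforms have $\U_t$-eigenvalues $\pm t^{k/2}$, so the characteristic polynomial contains the factor $X^2-t^k$ raised to roughly half the newform dimension (see the polynomials computed in Corollaries \ref{Corj=0} and \ref{Corn=j+2}), and the discriminant vanishes as soon as that power exceeds one. In the paper diagonalizability for odd $q$ is instead read off from the structure of the matrix: an antidiagonal block is annihilated by the separable polynomial $X^2-t^k$ (Corollary \ref{CorDiagAntidiag}), and in the remaining special shapes of Section \ref{SecSpecial} one exhibits enough eigenvectors directly. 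Your verification-in-small-weights program for part ({\bf 1}) and the even-characteristic half of part ({\bf 2}) is otherwise consistent with what the paper does.
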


In Section \ref{SecSpBlocks} we describe a matrix $M$ associated to the action of $\U_t$ on $S^1_{k,m}(\Gamma_0(t))$.
In Section \ref{SecMatrices} we translate all Conjectures \ref{OldConj} in linear algebra problems thanks to the previous calculation on $M$. 
In Section \ref{SecSpecial} we use tool from  Section \ref{SecMatrices} to prove several cases of the conjectures (in particular for all weights
$k\leqslant 5(q-1)$) and to present an equivalent formulation of conjecture {\bf 3} above (see Theorem \ref{ThmSum}).
Finally, in Section \ref{SecBounds} using a Newton Polygon argument we give upper and lower bounds on slopes and on the dimension
of the space of fixed slope (i.e. on the number of independent eigenforms with a fixed slope), for which we find a result comparable with the one of K. Buzzard in \cite{Bu}
for the characteristic 0 case. \\

\noindent {\bf Akwnoledgements}: We would like to thank the anonymous referee for his/her prompt report and
for informing us of the ongoing work of G. B\"ockle, P. Graef and R. Perkins on Maeda's conjecture.

\section{Setting and notations}\label{SecSet}
Let $K$ be the global function field $\F_q(t)$, where $q$ is a power of a fixed prime $p\in\Z$,
 fix the prime $\frac{1}{t}$ at $\infty$ and denote by
$A:=\F_q[t]$ its ring of integers (i.e., the ring of functions regular outside $\infty$).
Let $K_\infty$ be the completion of $K$ at $\frac{1}{t}$ with ring of integers $A_\infty$
and denote by $\C_\infty$ the completion of an algebraic closure of $K_\infty$.\\
The {\em Drinfeld upper half-plane} is the set $\O:=\P^1(\C_\infty) - \P^1(K_\infty)$
together with a structure of rigid analytic space (see \cite{FvdP}).

\subsection{The Bruhat-Tits tree}\label{SecTree}
The Drinfeld upper half plane has a combinatorial counterpart, the {\em Bruhat-Tits tree} $\Tr$ of $GL_2(K_\infty)$, which
we shall describe briefly here. For more details the reader is referred to \cite{G3}, \cite{G4} and \cite{S1} (a short summary of the
relevant information is also provided in \cite[Section 2.1]{BV2}).\\
The tree $\Tr$ is a $(q+1)$-regular tree on which $GL_2(K_\infty)$ acts transitively. Let us denote by $Z(K_\infty)$ the scalar
matrices of $GL_2(K_\infty)$ and by $\I(K_\infty)$ the {\em Iwahori subgroup}, i.e.,
\[ \I(K_\infty)=\left\{\matrix{a}{b}{c}{d} \in GL_2(A_\infty)\, : \, c\equiv 0 \pmod \infty \right\}. \]
Then the sets $X(\Tr)$ of vertices and $Y(\Tr)$ of oriented edges of $\Tr$ are given by
\[ X(\Tr) = GL_2(K_\infty)/Z(K_\infty)GL_2(A_\infty)\ \ {\rm and}\ \
Y(\Tr) = GL_2(K_\infty)/ Z(K_\infty)\I(K_\infty). \]
The canonical map from $Y(\Tr)$ to $X(\Tr)$ associates with each oriented edge $e$ its origin $o(e)$
(the corresponding terminus will be denoted by $t(e)$). The edge $\overline{e}$ is $e$ with
reversed orientation.\\
Two infinite paths in $\Tr$ are considered equivalent if they differ at finitely many edges. An {\em end} is an equivalence
class of infinite paths. There is a $GL_2(K_\infty)$-equivariant bijection between the ends of $\Tr$ and $\P^1(K_\infty)$.
An end is called {\em rational} if it corresponds to an element in $\P^1(K)$ under the above bijection.
Moreover, for any arithmetic subgroup $\G$ of $GL_2(A)$, the elements of $\G\backslash \P^1(K)$ are in bijection with
the ends of $\G\backslash \Tr$ (see \cite[Proposition 3.19]{B} and \cite[Lecture 7, Proposition 3.2]{GPRV})
and they are called the {\em cusps} of $\G$.\\
Following Serre \cite[pag 132]{S1}, we call a vertex or an edge {\em $\G$-stable} if its stabilizer in $\G$ is trivial
and {\em $\G$-unstable} otherwise.

\subsection{Drinfeld modular forms}\label{SecDrinfModForms}
The group $GL_2(K_\infty)$ acts on $\Omega$ via M\"obius transformation
\[  \left( \begin{array}{cc}
a & b  \\
c & d
\end{array} \right)(z)= \frac{az+b}{cz+d}. \]
Let $\G$ be an arithmetic subgroup of $GL_2(A)$. It has finitely many cusps, represented by $\G\backslash \P^1(K)$.
For $\g =\smatrix{a}{b}{c}{d}\in GL_2(K_\infty)$, $k,m \in \Z$
and $\varphi:\O\to \C_\infty$, we define
\begin{equation}\label{Mod0}
(\varphi \mf \g)(z) := \varphi(\g z)(\det \g)^m(cz+d)^{-k}.
\end{equation}

\begin{defin}
A rigid analytic function $\varphi:\O\to \C_\infty$ is called a {\em Drinfeld modular function of weight $k$ and type $m$ for $\G$} if
\begin{equation}\label{Mod} (\varphi \mf \g )(z) =\varphi(z)\ \ \forall \g\in\G.  \end{equation}
A Drinfeld modular function $\varphi$ of weight $k\geqslant 0$ and type $m$ for $\G$ is called a {\em Drinfeld modular form} if
$\varphi$ is holomorphic at all cusps.\\
A Drinfeld modular form $\varphi$ is called a {\em cusp form} if  it vanishes at all
cusps.\\
The space of Drinfeld modular forms
of weight $k$ and type $m$ for $\G$ will be denoted by $M_{k,m}(\G)$. The subspace of cuspidal modular forms is denoted by $S^1_{k,m}(\G)$.
\end{defin}

The above definition coincides with \cite[Definition 5.1]{B}, other authors require the function to be 
meromorphic (in the sense of rigid analysis, see for example \cite[Definition 1.4]{Cor}) and would call our
functions {\em weakly modular}.

Weight and type are not independent of each other:
if $k\not\equiv 2m \pmod{o(\Gamma)}$, where $o(\Gamma)$ is the number of scalar matrices in $\Gamma$, then $M_{k,m}(\G)=0$.
Moreover, if all elements of $\G$ have determinant 1, then equation \eqref{Mod0} shows that
the type does not play any role. If this is the case, for fixed $k$ all $M_{k,m}(\G)$ are isomorphic (the same holds for
$S^1_{k,m}(\G)$ and we will simply denote them by $M_{k}(\G)$ (resp. $S^1_{k}(\G)$).

\noindent All $M_{k,m}(\G)$ and $S^1_{k,m}(\G)$ are finite dimensional $\C_\infty$-vector spaces. For details on 
the dimension of these spaces see \cite{G1}. \\
Since $M_{k,m}(\G) \cdot M_{k',m'}(\G)\subset M_{k+k',m+m'}(\G)$ we have that
\[  M(\G) = \bigoplus_{k,m} M_{k,m}(\G)\quad \mathrm{and}\quad M^0(\G)=\bigoplus_k M_{k,0}(\G) \]
are graded $\C_\infty$-algebras. \\
Moreover, let 
\[ g\in M_{q-1,0}(GL_2(A)),\quad \Delta\in S^1_{q^2-1,0}(GL_2(A))\quad \mathrm{and}\quad h\in M_{q+1,1}(GL_2(A)) \]
be as in \cite[Sections 5 and 6]{G2}, then
\begin{equation}\label{GradAlg}
M^0(GL_2(A))=\C_\infty[g,\Delta]\quad \mathrm{and}\quad M(GL_2(A))=\C_\infty[g,h]\,.
\end{equation}

\subsection{Harmonic cocycles}\label{SecHarCoc}
For $k> 0$ and $m\in\Z$, let $V(k,m)$ be the $(k-1)$-dimensional vector space over $\C_\infty$ with basis
$\{X^jY^{k-2-j}: 0\leqslant j\leqslant k-2 \}$. The action of $\g=\smatrix{a}{b}{c}{d} \in GL_2(K_\infty)$ on $V(k,m)$ is given by
\[ \g(X^jY^{k-2-j}) = \det(\g)^{m-1}(dX-bY)^j(-cX+aY)^{k-2-j}\quad {\rm for}\ 0\leqslant j\leqslant k-2.\]
For every $\o\in \Hom(V(k,m),\C_\infty)$ we have an induced action of $GL_2(K_\infty)$
\[ (\g\o)(X^jY^{k-2-j})=\det(\g)^{1-m}\o((aX+bY)^j(cX+dY)^{k-2-j})\quad {\rm for}\ 0\leqslant j\leqslant k-2. \]
\begin{defin}
A {\em harmonic cocycle of weight $k$ and type $m$ for $\G$} is a function $\c$ from the set of directed edges
of $\Tr$ to $\Hom(V(k,m),\C_\infty)$ satisfying:
\begin{itemize}
\item[{\bf 1.}] {\em (harmonicity)} for all vertices $v$ of $\Tr$,
$\displaystyle{\sum_{t(e)= v}\c(e)=0}$,
where $e$ runs over all edges in $\Tr$ with terminal vertex $v$;
\item[{\bf 2.}] {\em (antisymmetry)} for all edges $e$ of $\Tr$, $\c(\overline{e})=-\c(e)$;
\item[{\bf 3.}] {\em ($\G$-equivariancy)} for all edges $e$ and elements $\g\in\G$, $\c(\g e)=\g(\c(e))$.
\end{itemize}
\end{defin}

\noindent The space of harmonic cocycles of weight $k$ and type $m$ for $\G$ will be denoted by $C^{har}_{k,m}(\G)$.

\subsubsection{Cusp forms and harmonic cocycles}\label{SecIsomModFrmHarCoc}
In \cite{T}, Teitelbaum constructed the so-called ``residue map'' which allows us to interpret cusp forms as harmonic cocycles.
Indeed, it is proved in \cite[Theorem 16]{T} that this map is actually an isomorphism
$S^1_{k,m}(\G)\simeq C^{har}_{k,m}(\G)$.\\
For more details the reader is referred to the original paper of Teitelbaum \cite{T} or to
\cite[Section 5.2]{B}, where the author gives full details in a more modern language. We remark that the two papers have different 
normalizations (as mentioned in \cite[Remark 5.8]{B}): here we adopt Teitelbaum's one but, working as in \cite[Section 5.2]{B} 
where computations for the residue map are detailed right after \cite[Definition 5.9]{B}, we obtain \cite[equation (17)]{B} which carries 
the action of the Hecke operators on harmonic cocycles (see next section).

\subsection{Hecke operators}\label{SecHecke}
\noindent We shall focus on the congruence groups $\G:=\G_0(t), \ \G_1(t)$ defined as
\[ \G_0(t)=\left\{ \left( \begin{array}{cc}
a & b  \\
c & d
\end{array} \right)\in GL_2(A): c\equiv 0 \pmod{t} \right\}\]
and
\[ \G_1(t)=\left\{ \left( \begin{array}{cc}
a & b  \\
c & d
\end{array} \right)\in GL_2(A): a\equiv d\equiv 1\ \mathrm{and}\ c\equiv 0 \pmod{t} \right\}.\]

If $\varphi\in M_{k,m}(GL_2(A))$ the Hecke operator is defined in the following way
\begin{align*} \T_t(\varphi)(z) & : = t^{k-m} ( \varphi \mf \matrix{t}{0}{0}{1} ) (z) +
t^{k-m} \sum_{b\in \F_q}  ( \varphi \mf \matrix{1}{b}{0}{t} ) (z)\\
 & = t^k\varphi(tz)+ \sum_{b\in\F_q}\varphi\left( \frac{z+b}{t}\right). \end{align*}
While, if $\varphi\in M_{k,m}(\G)$, $\G=\G_1(t)$, $\G_0(t)$, we have the analogue of the Atkin-Lehner operator
\begin{align*} \U_t(\varphi)(z) & :=t^{k-m}\sum_{b\in \F_q}  ( \varphi \mf \matrix{1}{b}{0}{t} ) (z) = \sum_{b\in\F_q}\varphi\left( \frac{z+b}{t}\right). \end{align*}

\subsection{Action of $\U_t$ on $\G_1(t)$-invariant cusp forms}
In order to describe the action of $\U_t$ on $S^1_k(\G_1(t))$ it is convenient to exploit the harmonic cocycles description of them.\\
The residue map allows us to define a Hecke action on harmonic cocycles in the following way:
\begin{align*}
\U_t(\c(e))= t^{k-m} \sum_{b\in \F_q}  \left(\begin{array}{cc}
1 & b  \\
0 & t
\end{array}\right)^{-1}\c\left( \left(\begin{array}{cc}
1 & b  \\
0 & t
\end{array}\right)e\right)
\end{align*}
(for details see formula (17) in \cite[Section 5.2]{B}, recalling Section \ref{SecIsomModFrmHarCoc}). \\
By \cite[Proposition 5.4]{B} and \cite[Corollary 5.7]{GN} we have that
\[  \dim_{\C_\infty}S^1_k(\G_1(t))=k-1\,. \]
Moreover, as a consequence of \cite[Lemma 20]{T}, cocycles in $C^{har}_{k,m}(\G_1(t))$ are determined by their
values on a stable edge $\bar{e}=(\begin{smallmatrix} 0 & 1\\ 1 & 0 \end{smallmatrix})$ of a fundamental domain 
for $\G_1(t)\backslash \Tr$ (the computations for fundamental domains are carried out in \cite{GN}, a short description of the
$\G_1(t)$ case is in \cite[Section 4]{BV2}). Therefore, for any $j\in\{0,1,\dots,k-2\}$, let $\c_j(\e)$ be defined by
\[ \c_j(\e)(X^iY^{k-2-i})=\left\{ \begin{array}{ll} 1 & {\rm if}\ i=j \\
0 & {\rm otherwise} \end{array} \right. \ .\]
The set $\mathcal{B}^1_k(\G_1(t)):=\{\c_j(\e),\,0\leqslant j\leqslant k-2\}$ is a basis for
$S^1_k(\G_1(t))$. By \cite[Section 4.2]{BV2} we have

\begin{align}\label{Ttcj}
\U_t(\c_j(\e)) & = -(-t)^{j+1} \binom{k-2-j}{j} \c_j(\e) -t^{j+1}\sum_{h\neq 0}\left[ \binom{k-2-j-h(q-1)}{-h(q-1)} \right.\\
\ &\left. + (-1)^{j+1} \binom{k-2-j-h(q-1)}{j} \right] \c_{j+h(q-1)}(\e)  \nonumber
\end{align}
(where it is understood that $\c_{j+h(q-1)}(\e) \equiv 0$ whenever $j+h(q-1)<0$ or $j+h(q-1)>k-2$).

From formula \eqref{Ttcj} one immediately notes that the $\c_j$ can be divided into classes modulo $q-1$ and
every such class is stable under the action of $\U_t$.
For any $0\leqslant j\leqslant q-2$, we shall denote by $C_j$ the class of $\c_j(\e)$, i.e, $C_j=\{\c_\ell(\e)\,:\,\ell\equiv{j}\pmod{q-1}\}$: the cardinality
of $C_j$ is the largest integer $n$ such that $j+(n-1)(q-1)\leqslant k-2$ (note that it is possible to have
$|C_j|=0$, exactly when $j>k-2$). 

\subsection{Newforms and oldforms}
We recall here our definitions of newforms and oldforms, and the main properties/formulas
for various maps between spaces of cusp forms (all details are in the paper \cite{BV2}).

\subsubsection{Oldforms}
Consider the injective map (see \cite[Proposition 3.1]{BV2})
\[ \delta : S_{k,m}^1(GL_2(A))\times S_{k,m}^1(GL_2(A)) \longrightarrow S_{k,m}^1(\G_0(t)) \]
\[ \delta(\varphi,\psi):=\delta_1\varphi+\delta_t\psi \]
where
\[ \delta_1,\delta_t: S^1_{k,m}(GL_2(A)) \rightarrow S^1_{k,m}(\G_0(t)) \]
\[ \delta_1(\varphi):=\varphi \]
\[ \delta_t(\varphi):=(\varphi\mf \matrix{t}{0}{0}{1})(z)\ {\rm ,i.e.,}\ (\delta_t(\varphi))(z)=t^m\varphi(tz). \]

\begin{defin}
{\em Oldforms of level $t$} are elements of $S^{1,old}_{k,m}(\Gamma_0(t)):=Im(\delta)$.
\end{defin}

Let $\varphi\in S^1_{k,m}(GL_2(A))$. We have that (see \cite[Section 3.2]{BV2}):
\begin{equation}\label{EqUtdelta1}
\delta_1(\T_t\varphi)= t^{k-m}\delta_t(\varphi)+ \U_t(\delta_1(\varphi))
\end{equation}
\begin{equation}\label{EqUtdeltat}
\U_t(\delta_t(\varphi))=0
\end{equation}

\subsubsection{Newforms}
Let
\[ \gamma_t:=\matrix{0}{-1}{t}{0} \]
be the {\em Fricke involution}.
To shorten notations we shall often use $\varphi^{Fr}$ to denote $(\varphi\mf \gamma_t)$. \\
It is easy to see that $(\varphi^{Fr})^{Fr}= t^{2m-k} \varphi$. Moreover, noting that $\matrix{0}{-1}{1}{0}\in GL_2(A)$ and that
$\matrix{0}{-1}{1}{0}\matrix{t}{0}{0}{1}=\gamma_t$, one readily observes that $\varphi^{Fr}=\delta_t(\varphi)$ 
for any $\varphi\in S^1_{k,m}(GL_2(A))$ (this final relation makes no sense for forms in $S^1_{k,m}(\Gamma_0(t))$ on which
$\delta_t$ is not defined, we also remark that $\varphi^{Fr}\neq (\varphi\mf \matrix{t}{0}{0}{1})$ in general).

To define the trace maps we use the following system of representatives for $GL_2(A)$ modulo $\Gamma_0(t)$:
\[ R:=\left\{ {\bf Id}_2, \matrix{0}{-1}{1}{b}\ b\in \F_q \right\}.\]

\begin{defin}\label{DefTrace}
For any cuspidal form $\varphi$ of level $t$ define the {\em trace}
\[ Tr(\varphi):=\sum_{\gamma\in R} (\varphi\mf \gamma) \]
and the {\em twisted trace}
\[ Tr'(\varphi):=Tr(\varphi^{Fr}) =\sum_{\gamma\in R} (\varphi\mf \gamma_t\gamma) .\]
Both $Tr$ and $Tr'$ are maps from $S_{k,m}^1(\G_0(t))$ to $S_{k,m}^1(\G_0(1))$ (see \cite[Definition 3.5]{Vi}).
\end{defin}

Let $\varphi\in S^1_{k,m}(\G_0(t))$. We have that (see \cite[Section 3.3]{BV2}):
\begin{equation}\label{EqTr}  Tr(\varphi)=\varphi+t^{-m}\U_t(\varphi^{Fr}) ,\end{equation}
\begin{equation}\label{EqTr'}   Tr'(\varphi)= \varphi^{Fr}+ t^{m-k}\U_t(\varphi) .\end{equation}
Moreover, for any $\varphi\in S^1_{k,m}(GL_2(A))$ (see \cite[Section 3.4]{BV2}), one has
\begin{equation}\label{EqTrdelta} 
Tr(\delta_1(\varphi))=\varphi \quad {\rm and}\quad Tr(\delta_t(\varphi)) =t^{m-k} \T_t\varphi .
\end{equation} 

Let $\varphi\in S^1_{k,m}(GL_2(A))$ be a $\T_t$-eigenform of eigenvalue $\lambda\neq 0$. Then 
$\delta(\varphi,-\frac{t^{k-m}}{\lambda}\varphi)\in S^1_{k,m}(\Gamma_0(t))$ is a $\U_t$-eigenform of eigenvalue $\lambda$.
One can actually prove that $\{Eigenvalues\ of\ {\U_t}_{|Im(\delta)}\}=\{Eigenvalues\ of\ \T_t\}\cup\{0\}$ 
(see \cite[Proposition 3.6]{BV2}, the $0$ comes from $Ker(\U_t)=Im(\delta_t)\,$), so we have information on ``old eigenvalues''.
Moreover one can check that $\delta(\varphi,-\frac{t^{k-m}}{\lambda}\varphi)\in Ker(Tr)$ for any $\varphi$ as above, hence the kernel of
the trace is not enough to distinguish newforms (as it was in the classical case, see e.g. \cite[Section 4]{GM1}).

\begin{defin}\label{DefNewforms}
{\em Newforms of level $t$} are elements in $S^{1,new}_{k,m}(\Gamma_0(t)):=Ker(Tr)\cap Ker(Tr')$.
\end{defin}

Let $\varphi$ be a newform of level $t$ which is also an $\U_t$ eigenform of eigenvalue $\lambda$. Then by \eqref{EqTr}
and \eqref{EqTr'} we have that $\varphi=-t^{-m}\U_t(\varphi^{Fr})$ and $\varphi^{Fr}=-t^{m-k}\U_t(\varphi)$. Hence
\[  \lambda^2\varphi=\U_t^2(\varphi)=t^k \varphi. \]
Then, newforms can only have eigenvalues $\pm t^{\frac{k}{2}}$ and slope $\frac{k}{2}$.

\subsection{Conjectures}
Numerical data (see also \cite[Section 5]{BV2}) and comparison with the classical case led us to the following conjectures

\begin{conjs}\label{OldConj}\ 
\begin{enumerate}
\item[{\bf 1.}] $Ker(\T_t)=0$;
\item[{\bf 2.}] $\U_t$ is diagonalizable when $q$ is odd and, when $q$ is even, it is diagonalizable if and only if the dimension of
$S^{1,new}_{k,m}(\Gamma_0(t))$ is 1;
\item[{\bf 3.}] $S^1_{k,m}(\Gamma_0(t))=S^{1,old}_{k,m}(\Gamma_0(t))\oplus S^{1,new}_{k,m}(\Gamma_0(t))= Im(\delta)\oplus(Ker(Tr)\cap Ker(Tr'))$.
\end{enumerate}
\end{conjs}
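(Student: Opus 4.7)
The strategy is to make all three conjectures concrete via the explicit matrix of $\U_t$ in the basis $\mathcal{B}^1_k(\G_1(t))=\{\c_j(\e)\}$, exploiting the observation that equation \eqref{Ttcj} splits $\U_t$ along the residue classes $C_j$ modulo $q-1$. Passing from $\G_1(t)$ to $\G_0(t)$ with a fixed type $m$ only selects which classes $C_j$ survive, so $\U_t$ is described by a block diagonal matrix $M$ whose blocks are controlled by the binomial coefficients appearing in \eqref{Ttcj}. Conjectures \textbf{1}, \textbf{2} and \textbf{3} can then be translated into statements about invariants (kernel, minimal polynomial, and intersections of certain images/kernels) of $M$ that, in small cases, are amenable to direct computation.

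For \textbf{1} I would feed the relation \eqref{EqUtdelta1} into the explicit $\U_t$: if $\T_t\varphi=0$ then $\U_t\delta_1(\varphi)=-t^{k-m}\delta_t(\varphi)$, and combining with \eqref{EqUtdeltat} yields $\U_t^2\delta_1(\varphi)=0$, so $\delta_1(\varphi)$ would lie in a $2$-dimensional Jordan block of $\U_t$ with eigenvalue $0$. One then tries to rule this out using the block structure of $M$ restricted to $Im(\delta)$; for low weights this reduces to the non-vanishing of specific minors. For \textbf{2} the key observation is built into the statement itself: the identity $\U_t^2=t^k$ on newforms implies that $\U_t$ restricted to $S^{1,new}_{k,m}(\G_0(t))$ satisfies $X^2-t^k=(X-t^{k/2})(X+t^{k/2})$, which is separable exactly when $q$ is odd and, when $q$ is even, only if the new space has dimension $\leqslant 1$. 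Combining this with the direct sum predicted in \textbf{3} and with \textbf{1} (which rules out a nilpotent summand on $Im(\delta)$ produced by $Ker(\T_t)$), diagonalizability on the old part reduces to the semisimplicity of $\T_t$ on $S^1_{k,m}(GL_2(A))$, itself a statement about the blocks of $M$.

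For \textbf{3} I would compute the composition $(Tr,Tr')\circ\delta$ from $S^1_{k,m}(GL_2(A))^{\oplus 2}$ to itself using \eqref{EqTr}, \eqref{EqTr'}, \eqref{EqTrdelta} and $\varphi^{Fr}=\delta_t(\varphi)$, expressing it as a $2\times 2$ matrix with entries in $\C_\infty[\T_t]$. Injectivity of this operator would give $Im(\delta)\cap Ker(Tr)\cap Ker(Tr')=0$, after which a dimension count using the known dimensions of $S^1_{k,m}(GL_2(A))$ and $S^1_{k,m}(\G_0(t))$ (from \cite{G1} and \cite{GN}) closes the direct sum; this invertibility turns out to be essentially equivalent to \textbf{1}, so the three conjectures are tightly interlocked.

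The hard part is combinatorial and common to all three items: each reduction ultimately rests on the non-vanishing of determinants or specific minors of blocks of $M$ whose entries are sums of binomial coefficients of the form $\binom{k-2-j-h(q-1)}{j}$ reduced modulo $p$. Controlling these for arbitrary weight requires delicate base-$p$ arithmetic {\em \`a la} Lucas, which is why a complete proof appears feasible only in the low-weight range $k\leqslant 5(q-1)$ announced in Section \ref{SecSpecial}, where the blocks are small enough to be analyzed by hand.
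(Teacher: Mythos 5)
You are addressing a statement that the paper itself leaves as a conjecture: it is not proved in general, but is translated into linear algebra (Conjectures \ref{NewConj}), given a necessary and sufficient criterion for item \textbf{3} (Theorem \ref{ThmSum}, Remark \ref{RemTF}), and verified in special cases by explicit Lucas-type computations (Section \ref{SecSpecial}, covering in particular $k\leqslant 5(q-1)$). Your overall program --- write $\U_t$ blockwise on the classes $C_j$, recast each item as a statement about kernels and intersections for the explicit matrix, and settle low weights by base-$p$ analysis of the binomial entries --- is the same as the paper's, and your way of closing item \textbf{3} (trivial intersection of $Im(\delta)$ with $Ker(Tr)\cap Ker(Tr')$ plus the dimension count $\dim\bigl(Ker(Tr)\cap Ker(Tr')\bigr)\geqslant n-2r$, $\dim Im(\delta)=2r$) is a legitimate variant of the paper's explicit solution for $(\varphi,\psi)$ in the proof of Theorem \ref{ThmSum}.

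Two of your claimed reductions, however, are not correct. First, the invertibility of $(Tr,Tr')\circ\delta$ is not ``essentially equivalent to \textbf{1}'': using \eqref{EqTrdelta} and $(\varphi^{Fr})^{Fr}=t^{2m-k}\varphi$, its matrix is $\smatrix{1}{t^{m-k}\T_t}{t^{m-k}\T_t}{t^{2m-k}}$, whose kernel is nontrivial exactly when $\T_t^2\psi=t^k\psi$ for some $\psi\neq 0$, i.e.\ when $\T_t$ has an eigenvalue $\pm t^{k/2}$ at level one. This is precisely the criterion of Theorem \ref{ThmSum} and Remark \ref{RemTF}; it concerns eigenforms of slope $k/2$, not $Ker(\T_t)=0$ (eigenvalue $0$), and neither condition implies the other, so tying \textbf{3} to \textbf{1} in this way would fail. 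Second, for \textbf{2} the identity $\U_t^2=t^k$ on newforms does not by itself force non-diagonalizability in even characteristic when the new space has dimension $\geqslant 2$: the minimal polynomial could be $X-t^{k/2}$, i.e.\ $\U_t$ could act as the scalar $t^{k/2}$ on a large new space, which is perfectly compatible with $\U_t^2=t^k$ and with diagonalizability. The paper's motivation for \textbf{2} requires the additional input that the new block tends to be antidiagonal (Section \ref{SecAntidiagonalNewforms}, proved for $n\leqslant j+1$ in Theorem \ref{ThmAntidiagonal}), which is what excludes the scalar case; without it your argument only identifies the characteristic polynomial on newforms, not the presence of a nontrivial Jordan block. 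With these two points repaired (and with the understanding that, like the paper, you only obtain the conjectures in the low-weight range where the blocks can be computed), your outline matches the paper's treatment.
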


A few words on Conjecture {\bf 2}: we already have examples of non diagonalizability in even characteristic provided in \cite{BV1} and 
\cite[Section 5]{BV2} and they all seem to depend on the fact that the action of $\U_t$ on newforms has the tendency to being antidiagonal
(for more examples see Section \ref{SecAntidiagonalNewforms}). Such matrices (with only one eigenvalue, namely $t^{\frac{k}{2}}$ as mentioned before)
are never diagonalizable in even characteristic (unless, of course, they have dimension 1), hence our conjecture. Moreover it is easy to see
that, if $\T_t$ is diagonalizable on $S^1_{k,m}(GL_2(A))$, then $\U_t$ is diagonalizable on $Im(\delta)=S^{1,old}_{k,m}(\Gamma_0(t))$
if and only if $\T_t$ is injective. Therefore our Conjecture {\bf 1} can be seen an a first step towards (or, thanks to Conjecture {\bf 3}, as a 
consequence of) Conjecture {\bf 2}.

\begin{rem}
In the characteristic zero case Maeda's Conjecture \cite{HM} predicts that in level $N=1$ for a prime $p\in\Z$ the polynomial
\[  P_{k,p}(X):=\prod_{f} (X-a_p(f)) \]
where $f=\sum_n a_n(f)q^n\in S_k(SL_2(\Z))$ runs over all normalized eigenforms (i.e. such that $a_1(f)=1$) of a chosen basis, 
is irreducible over $\Q$. 
Moreover, Maeda conjectured that the Galois group of $P_{k,p}(X)$ is the full symmetric group $S_d$ where $d=\dim_\C S_k(SL_2(\Z))$.\\
It is clear from our tables that Maeda's conjecture has to be reformulated in even characteristic when there are inseparable eigenvalues.
Besides, even in odd characteristic eigenvalues are mostly in $\F_q[t]$, as one can see by the above mentioned tables,
 and this proves that an analogue of Maeda's conjecture is false in our setting \footnote{The anonymous referee kindly informed us  
 that there is some ongoing work by G. B\"oeckle, P. Graef and R. Perkins on suitable formulations of Maeda's conjecture in the Drinfeld setting.}.
\end{rem}

\section{The blocks associated to $S^1_{k,m}(\G_0(t))$}\label{SecSpBlocks}
A set of representatives for $\G_0(t)/\G_1(t)$ is provided by the $(q-1)^2$ matrices
$R^0_1=\left\{ \matrix{a}{0}{0}{d}\,:\,a,d\in \F_q^*\right\}$, hence a cocycle $\c_j$ comes from $S^1_{k,m}(\G_0(t))$
if and only if it is $R^0_1$-invariant. Direct computation leads to
\[ \matrix{a}{0}{0}{d}^{-1}\c_j\left(\matrix{a}{0}{0}{d}\e\right)(X^\ell Y^{k-2-\ell})=
a^{m-1-\ell}d^{m-k+\ell+1}\c_j(\e)(X^\ell Y^{k-2-\ell}).\]
Therefore
\[ \matrix{a}{0}{0}{d}\cdot\c_j=a^{m-1-j}d^{m-k+j+1}\c_j\quad \forall j \]
and this is $R^0_1$-invariant if and only if
\[ a^{m-1-j}d^{m-k+j+1}=1 \quad \forall a,d\in \F_q^*.\]
This yields
\[ j\equiv m-1\equiv k-m-1 \pmod{q-1} ,\ {\rm i.e.}\ k\equiv 2j+2 \pmod{q-1} \]
(and $k\equiv 2m\pmod{q-1}$ as natural to get a nonzero space of cuspidal forms for $\G_0(t)$). If $q$ is even this
provides a unique class $C_j$, if $q$ is odd then we have two solutions: $j$ (assumed to be the smallest nonnegative one)
and $j+\frac{q-1}{2}$. Note that in any case $k$ has the form $2j+2+(n-1)(q-1)=2(j+\frac{q-1}{2})+2+(n-2)(q-1)$ for some integer 
$n$ and the classes corresponding to $S^1_{k,m}(\G_0(t))$ are determined by the type $m$ (as predictable since $m$ plays a 
role in $S^1_{k,m}(\G_0(t))$ but not in $S^1_k(\G_1(t))$, because all matrices in $\G_1(t)$ have determinant 1). 
If $q$ is even then the unique class has dimension $n$, while for odd $q$ we have
\[ |C_j|=n\quad{\rm and}\quad |C_{j+\frac{q-1}{2}}|=n-1 .\]

\begin{rem}\label{RemDimCusp}
This could be seen as an easy alternative to the Riemann-Roch argument usually used to
compute the dimension of such spaces, see for example \cite[Section 4]{Cor}.
\end{rem}

\subsection{Matrices associated to $C_j$}\label{SecSymmetry} Since we will focus on the block(s)
coming from level $\G_0(t)$ only (unless stated otherwise), when we speak about the block $C_j$ of dimension $n$
we always imply that $j$ and $n$ are such that $k=2j+2+(n-1)(q-1)$ (formulas for $C_{j+\frac{q-1}{2}}$ are the same, 
just substitute $j$ with $j+\frac{q-1}{2}$ and take into account
the different parity of the dimension $n-1$).\\

\noindent Using formula \eqref{Ttcj} one finds that the general entries of the matrix associated to the action of $\U_t$ 
on $S^1_{k,j+1}(\G_1(t))$ are ($a,b$ are now the row-columns indices): 
\begin{equation}\label{EqCoeffMj}
m_{a,b}(j,k) = \left\{ \begin{array}{ll} \displaystyle{-t^{j+1+(b-1)(q-1)}\left[\binom{k-2-j-(a-1)(q-1)}{(b-a)(q-1)} \right.} & \\
\displaystyle{\left. +(-1)^{j+1+(b-1)(q-1)}\binom{k-2-j-(a-1)(q-1)}{j+(b-1)(q-1)}\right]} & {\rm if}\ a\neq b \\
\ & \\
\displaystyle{-(-t)^{j+1+(a-1)(q-1)}\binom{k-2-j-(a-1)(q-1)}{j+(a-1)(q-1)}} & {\rm if}\ a = b \end{array}\right.
\end{equation}
(for future reference note that for any $q$ one has $(-1)^{(\ell-1)(q-1)}=1$ for any $\ell$). Remember that $0\leqslant j\leqslant q-2$
and so the type is $0$ when $j=q-2$.

We denote by $M$ the coefficient matrix (i.e., the one without the powers of $t$) associated to the action of $\U_t$ on $C_j$.

Specializing formula \eqref{EqCoeffMj} at our particular value of $k$, we see that the general entries of $M$ are
\begin{equation}\label{spam}
\displaystyle{ m_{a,b}= \left\{\begin{array}{ll}
\displaystyle{-\left[\binom{j+(n-a)(q-1)}{j+(n-b)(q-1)} + (-1)^{j+1} \binom{j+(n-a)(q-1)}{j+(b-1)(q-1)}\right]} & {\rm if}\ a\neq b \\
\displaystyle{(-1)^j\binom{j+(n-a)(q-1)}{j+(a-1)(q-1)}} & {\rm if}\ a=b \end{array}\right. .}
\end{equation}
It is easy to check that $M$ satisfies some symmetry relations. We write down those for even $n$, the other case
is similar. In particular
\begin{itemize}\label{symmetry}
\item[S1.] {\em symmetry between columns}: $m_{a,n+1-b}=(-1)^{j+1}m_{a,b}$ for any $a\neq b, n+1-b$,
i.e., outside diagonal and antidiagonal (because of this we shall simply check the first $\frac{n}{2}$ columns from now on);
\item[S2.] {\em symmetry between diagonal and antidiagonal}: $m_{a,n+1-a}=(-1)^{j+1}(m_{a,a}-1)$ for any $a\neq n+1-a$;
\item[S3.] {\em antidiagonal, $\frac{n}{2}+1\leqslant a\leqslant n$}:
\[ m_{a,n+1-a}=-\left[\binom{j+(n-a)(q-1)}{j+(a-1)(q-1)} + (-1)^{j+1} \binom{j+(n-a)(q-1)}{j+(n-a)(q-1)}\right]=(-1)^j \]
(because in our range $n-a<a-1$). This yields
\[ (-1)^j=(-1)^{j+1}(m_{a,a}-1),\quad{\rm i.e.}\quad m_{a,a}=0 \]
in the range in which S2 and S3 hold.
\item[S4.] {\em below antidiagonal, $\frac{n}{2}+1\leqslant a\leqslant n-1$}:
\[ -\left[\binom{j+(n-a)(q-1)}{j+(n-b)(q-1)} + (-1)^{j+1} \binom{j+(n-a)(q-1)}{j+(b-1)(q-1)}\right]=0 \]
(because in our range $n-a<n-b$ and $n-a<b-1$);
\end{itemize}

Putting all these information together we can see that for any even $n$ the matrix $M$
has the following shape
{ \small\[ \left(\begin{array}{cccccccc} m_{1,1} & m_{1,2} & \cdots & m_{1,\frac{n}{2}} & (-1)^{j+1}m_{1,\frac{n}{2}}
& \cdots & (-1)^{j+1}m_{1,2} & (-1)^{j+1}(m_{1,1}-1)\\
m_{2,1} & m_{2,2} & \cdots & m_{2,\frac{n}{2}} & (-1)^{j+1}m_{2,\frac{n}{2}} & \cdots & (-1)^{j+1}(m_{2,2}-1) & (-1)^{j+1}m_{2,1}\\
\vdots & \vdots &   & \vdots & \vdots &  & \vdots & \vdots\\
m_{\frac{n}{2},1} & m_{\frac{n}{2},2} & \cdots & m_{\frac{n}{2},\frac{n}{2}} & (-1)^{j+1}(m_{\frac{n}{2},\frac{n}{2}}-1) & \cdots &
(-1)^{j+1}m_{\frac{n}{2},2} & (-1)^{j+1}m_{\frac{n}{2},1}\\
m_{\frac{n}{2}+1,1} & m_{\frac{n}{2}+1,2} & \cdots & (-1)^j & 0 & \cdots & (-1)^{j+1}m_{\frac{n}{2}+1,2}
& (-1)^{j+1}m_{\frac{n}{2}+1,1}\\
\vdots & \vdots & \iddots  & \vdots & \vdots & \ddots & \vdots & \vdots\\
m_{n-1,1} & (-1)^j & \cdots  & 0 & 0 &  \cdots & 0 & (-1)^{j+1}m_{n-1,1}\\
(-1)^j & 0 & \cdots  & 0 & 0 & \cdots & 0 & 0
\end{array}
\right) \] }
while, for odd $n$, one simply needs to modify the indices a bit and add the central $\frac{n+1}{2}$-th column
\[ (m_{1,\frac{n+1}{2}}, \cdots , m_{\frac{n-1}{2},\frac{n+1}{2}}, (-1)^j, 0, \cdots, 0) .\]

\section{Matrices and Conjectures}\label{SecMatrices}
We are now going to translate our previous formulas and conjectures in a matrix version which hopefully will make our tasks easier
(at least in small dimensions). We need the matrices associated to all the operators involved in our computations so we fix notations for them 
once and for all and we shall see that everything can be written (basically) in terms of 3 matrices.

\subsection{Atkin Operator}
By the previous section it is easy to see that the matrix associated to $\U_t$ acting on $C_j$ is
\begin{equation}\label{EqAt}
U= M D= M \left(\begin{array}{ccc}
t^{s_1} & \cdots & 0\\
 & \ddots & \\
0 & \cdots & t^{s_n}
\end{array}\right)
\end{equation}
where for $1\leqslant i\leqslant n$ we set $s_i=j+1+(i-1)(q-1)$.

\subsection{Fricke involution} We compute the Fricke action on cocycles.
\begin{align*} 
\c_i^{Fr}(\e)(X^\ell Y^{k-2-\ell}) & = \matrix{0}{-1}{t}{0}^{-1}\c_i\left( \matrix{0}{-1}{t}{0}  \matrix{0}{1}{1}{0} \right) (X^\l Y^{k-2-\l}) \\
 & = \matrix{0}{\frac{1}{t}}{-1}{0} \c_i \left( \matrix{1}{0}{0}{t} \matrix{-1}{0}{0}{1} \right) (X^\l Y^{k-2-\l})\\
  & = (-1)^{k-\l-1} t^{m-\l -1}\c_i(\e)(X^{k-2-\l} Y^\l)
\end{align*}
so that
\begin{equation}\label{EqFrCoc}
\c_i^{Fr}=(-1)^{i+1}t^{i+1+m-k} \c_{k-2-i}
\end{equation}
(note that $\c_i$ and $\c_{k-2-i}$ correspond to ``symmetric'' columns in the block
associated with $C_j$).\\
Therefore, the $b$-th column of the Fricke acting on the block $C_j$ comes from
\[ \c_{j+(b-1)(q-1)}^{Fr}  = t^{m-k} ( (-t)^{j+1+(b-1)(q-1)}\c_{j+(n-b)(q-1)} )\,.\]
Observe that $(-1)^{j+1+(b-1)(q-1)}=(-1)^{j+1}$; so the matrix associated this action is
\begin{equation}\label{MatrixFricke}
t^{m-k}F=t^{m-k}\left( \begin{array}{ccccc}
0 & 0 & \cdots &  0  & (-t)^{s_n} \\
0 & 0 & \cdots &  (-t)^{s_{n-1}} & 0 \\
\vdots & \vdots & \iddots & 0 & \vdots \\
0 & (-t)^{s_2} & 0 & \cdots & \vdots \\
(-t)^{s_1} & 0 & \cdots & \cdots & 0
\end{array} \right) .
\end{equation}

Note that, since $k$ is even when $q$ is odd and $s_i+s_{n-i+1}=k$ for any $i$, we have $F^2=t^kI$ (where $I$ is the identity matrix).
We remark that, letting $A$ be the antidiagonal matrix  
\[ A= \left( \begin{array}{ccc} 0 & \dots  & (-1)^{j+1} \\ 
 & \iddots & \\ (-1)^{j+1} & \dots & 0  \end{array}\right),\] 
one has $AF=D$. As an example of the translations of our previous formulas in matrix form one can easily check that 
\[ (t^{m-k}F)^2=t^{2m-2k}F^2=t^{2m-2k}(-t)^kI=t^{2m-k}I .\]
This corresponds to $(\varphi^{Fr})^{Fr}=t^{2m-k}\varphi$.

\subsection{Trace maps}\label{SecTrMaps}
By equation \eqref{EqTr} we have that the trace action on cocycles is 
\[  Tr(\c_i)= \c_i + t^{-m} \U_t(\c_i^{Fr}),\]
i.e. in terms of matrices
\begin{equation}\label{eqTr} 
T:= I+t^{-m}MD(t^{m-k}F)=I+t^{-k}MAF^2 = I+MA .
\end{equation}


By equation \eqref{EqTr'} (or composing \eqref{eqTr} with the Fricke matrix $t^{m-k}F$) it is easy to see that the matrix for the twisted
trace on $C_j$ is 
\begin{equation}\label{eqTr'}
  T'=t^{m-k}(F+ MD ).
\end{equation}  

Since $Tr(\delta_1(\varphi))=\varphi$ we have $T=T^2$ and $\psi\in Im(\delta_1) \iff Tr(\psi)=\psi$, which yields
\begin{equation}\label{EqT^2}
I+MA=(I+MA)^2=I+2MA+MAMA\quad  {\rm and}\quad Im(\delta_1)=Ker(MA).
\end{equation}
The first relation readily implies
\[ MA(I+MA)=0\quad{\rm and}\quad (I+MA)MA=0 ,\]
i.e. $Im(T)=Ker(MA)=Im(\delta_1)$ (which is obvious) and, since $A$ is invertible,
\[ Im(M)\subseteq Ker(T) .\]
In particular this leads to $Im(\U_t)\subseteq Ker(Tr)$.

Finally there is an obvious relation between $Ker(Tr)$ and $Ker(Tr')$ which, in terms of matrices, reads as
$Ker(T')=F(Ker(T))$  (indeed $\varphi\in Ker(Tr') \iff \varphi^{Fr}\in Ker(Tr)\,$) and we recall that, by \cite[Theorem 3.9]{BV2},
$ Ker(\U_t)=Im(\delta_t)$, i.e. $Ker(MD)=Im(\delta_t)$. Therefore {\em oldforms} are 
\[ S^{1,old}_{k,m}(\Gamma_0(t)):=Ker(MA)\oplus Ker(MD) \] 
(direct sum because $\delta$ is injective) and {\em newforms} are 
\[ S^{1,new}_{k,m}(\Gamma_0(t)):=Ker(T)\cap F(Ker(T))= Ker(I+MA)\cap F(Ker(I+MA)). \]

\subsection{Conjectures II}
By \eqref{EqTrdelta} we have $\varphi\in Ker(\T_t) \iff Tr(\delta_t(\varphi))=0$ and we recall that on forms in $S^1_{k,m}(GL_2(A))$
$\delta_t$ acts as the Fricke map. Therefore $\varphi\in Ker(\T_t)$ yields an element in 
\[ Ker(MA)\cap Ker(T')=Ker(MA)\cap Ker(F+MD)=Ker(MA)\cap F(Ker(I+MA)).\]
Our previous Conjectures \ref{OldConj}, can be now rewritten as

\begin{conjs}\label{NewConj}\ 
\begin{enumerate}
\item[{\bf 1.}] $Ker(MA)\cap Ker(F+MD)=Ker(MA)\cap F(Ker(I+MA))=0$;
\item[{\bf 2.}] $MD$ is diagonalizable if $q$ is odd and, when $q$ is even, it is diagonalizable if and only if 
$\dim_{\C_\infty}S^{1,new}_{k,m}(\Gamma_0(t))\leqslant 1$;
\item[{\bf 3.}] $S^1_{k,m}(\Gamma_0(t))=S^{1,old}_{k,m}(\Gamma_0(t))\oplus S^{1,new}_{k,m}(\Gamma_0(t))=
(Ker(MA)\oplus Ker(MD))\oplus (Ker(T)\cap FKer(T))$.
\end{enumerate}
\end{conjs}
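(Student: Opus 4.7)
The plan is to derive each item of Conjectures \ref{NewConj} from the corresponding item of Conjectures \ref{OldConj} by systematically translating every space and map into its matrix avatar on the block $C_j$ (the twin block $C_{j+(q-1)/2}$ occurring for odd $q$ is treated identically). The translation rests on a small dictionary. First, the idempotency $T=T^2$ recorded in \eqref{EqT^2} identifies $Im(\delta_1)$ with $Ker(MA)$. Second, since $\delta_t(\varphi)=\varphi^{Fr}$ on $S^1_{k,m}(GL_2(A))$ and Fricke is given on cocycles by the matrix $t^{m-k}F$, we have $Im(\delta_t)=F\cdot Ker(MA)$; combined with $Ker(\U_t)=Im(\delta_t)$ from \cite[Theorem 3.9]{BV2}, this yields $Ker(MD)=F\cdot Ker(MA)$.

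For item 1, the trace formula $Tr(\delta_t(\varphi))=t^{m-k}\T_t(\varphi)$ in \eqref{EqTrdelta} and the injectivity of $\delta_t$ identify $Ker(\T_t)$ with $Im(\delta_t)\cap Ker(Tr)=Ker(MD)\cap Ker(I+MA)$. Applying the invertible matrix $F$, which sends $Ker(MD)$ onto $Ker(MA)$ and $Ker(I+MA)=Ker(T)$ onto $Ker(T')$, transports this intersection to $Ker(MA)\cap Ker(T')$; writing $Ker(T')=Ker(F+MD)$ via $T'=t^{m-k}(F+MD)$ gives the first equality in item 1, which in turn equals $Ker(MA)\cap F(Ker(I+MA))$ by $Ker(T')=F(Ker(T))$.

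For item 2, by \eqref{EqAt} the matrix $U=MD$ is \emph{literally} the operator $\U_t$ on $C_j$, so the diagonalizability statement transfers tautologically. The even-characteristic refinement is exactly the observation recorded just before the conjectures: on the newform block $\U_t^2=t^kI$ forces the minimal polynomial to be a power of $X-t^{k/2}$, and in characteristic two this means diagonalizability can occur only when the newform block is at most one-dimensional. For item 3, one assembles the previous pieces: $Im(\delta)=Im(\delta_1)+Im(\delta_t)=Ker(MA)+Ker(MD)$ (direct by the injectivity of $\delta$ in \cite[Proposition 3.1]{BV2}), and $Ker(Tr)\cap Ker(Tr')=Ker(T)\cap Ker(T')=Ker(T)\cap F(Ker(T))$ by the definitions of $T,T'$ and by $Ker(T')=F(Ker(T))$, yielding the matrix form.

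The one step of the dictionary that is not purely formal is $Ker(MD)=F\cdot Ker(MA)$, which I regard as the main (if modest) obstacle: to avoid using Conjectures \ref{NewConj} downstream in a circular way, I would prefer to prove it intrinsically via the matrix identity $MDF=t^kMA$. Starting from $AF=D$, an immediate consequence of the explicit shapes of $A$ and $F$ in Section \ref{SecSymmetry}, one has $F=A^{-1}D$, and substituting into $F^2=t^kI$ yields $DA^{-1}D=t^kA$, whence $MDF=MDA^{-1}D=t^kMA$. Since $F$ is invertible this gives the desired equality of kernels and closes the dictionary.
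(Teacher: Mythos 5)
Your proposal is correct and follows essentially the same route as the paper: Section \ref{SecMatrices} obtains Conjectures \ref{NewConj} from Conjectures \ref{OldConj} by exactly the dictionary you build ($U=MD$, $T=I+MA$, $T'=t^{m-k}(F+MD)$, $Im(\delta_1)=Ker(MA)$, $Im(\delta_t)=Ker(MD)$, $Ker(T')=F(Ker(T))$, oldforms $=Ker(MA)\oplus Ker(MD)$, newforms $=Ker(T)\cap F(Ker(T))$), the statement itself remaining conjectural. The only harmless deviations are that you realize $Ker(\T_t)$ as $Ker(MD)\cap Ker(T)$ via $\delta_t$ and then apply $F$, where the paper embeds it directly in $Ker(MA)\cap Ker(T')$ via $\delta_1$, and your identity $MDF=t^kMA$ (equivalently $DF=t^kA$, immediate from $AF=D$ and $F^2=t^kI$) is a clean linear-algebra substitute for quoting $Ker(\U_t)=Im(\delta_t)$ when identifying $Ker(MD)$ with $F\,Ker(MA)$.
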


As a starting point we can easily observe that $Ker(MA)\cap Ker(T)=Ker(MD)\cap Ker(T')=0$.

\section{Main theorems and special cases}\label{SecSpecial}
We shall provide a criterion for the conjecture on newforms and oldforms and then use the explicit formula for the matrices
to verify all conjectures for various values of $j$, $n$ and $q$. In particular a few special cases will provide a proof for the conjectures for
cusp forms of weight $k\leqslant 5q-5$, but, with the criterion of Theorem \ref{ThmSum}, it should be quite easy to go much further.

\subsection{Sum of oldforms and newforms}\label{SecSum}
To prove Conjecture {\bf 3} we need 
\[ \left(Ker(MA)\oplus Ker(MD)\right)\cap\left(Ker(I+MA)\cap F(Ker(I+MA))\right)=0\]
\[ S^1_{k,m}(\Gamma_0(t))=\left(Ker(MA)\oplus Ker(MD)\right)+\left(Ker(I+MA)\cap F(Ker(I+MA))\right) .\]
We provide a necessary and sufficient condition for these to hold.

\begin{thm}\label{ThmSum}
We have  
\[  S^1_{k,m}(\Gamma_0(t))= S^{1,old}_{k,m}(\Gamma_0(t))\oplus S^{1,new}_{k,m}(\Gamma_0(t)) \iff
I-t^{-k}(TF)^2\ is\ invertible.\]
\end{thm}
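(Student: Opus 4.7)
The plan is to show that both sides of the equivalence reduce, via a common intermediate condition, to the invertibility of $I-t^{-k}\T_t^2$ acting on $S^1_{k,m}(GL_2(A))$; equivalently, to the fact that $\T_t$ has no eigenvalue in $\{\pm t^{k/2}\}$.

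First I would localize $Ker(L)$, where $L:=I-t^{-k}(TF)^2$. Using $T=I+MA$, $AF=D$ and $T^2=T$ (from \eqref{EqT^2}), one gets $TF=F+MD$ and $T\cdot(TF)^2=TFTF=(TF)^2$; hence $Lv=0$ forces $(TF)^2v=t^k v$ and then $t^k Tv=t^k v$, so $Ker(L)\subseteq Im(T)=Ker(MA)=Im(\delta_1)$. A direct computation on oldforms, using the matrix-Fricke normalization $t^{m-k}F$, the identities $(\delta_1(\psi))^{Fr}=\delta_t(\psi)$ and $(\delta_t(\psi))^{Fr}=t^{2m-k}\delta_1(\psi)$, and the trace formulas in \eqref{EqTrdelta}, then yields
\[
L(\delta_1(\psi))=\delta_1\bigl((I-t^{-k}\T_t^2)\psi\bigr),\qquad L(\delta_t(\phi))=\delta_t(\phi)-t^{m-k}\delta_1(\T_t\phi).
\]
Combined with $Ker(L)\subseteq Im(\delta_1)$, this shows $L$ is invertible iff $I-t^{-k}\T_t^2$ is invertible on $S^1_{k,m}(GL_2(A))$.

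For the implication $(\Leftarrow)$, given $\varphi\in S^1_{k,m}(\Gamma_0(t))$ I would look for $\varphi_o=\delta_1(\psi)+\delta_t(\phi)$ satisfying $Tr(\varphi_o)=Tr(\varphi)$ and $Tr'(\varphi_o)=Tr'(\varphi)$. Via \eqref{EqTr}, \eqref{EqTr'} and \eqref{EqTrdelta} this becomes the $2\times 2$ linear system
\[ \psi+t^{m-k}\T_t\phi=Tr(\varphi),\qquad \T_t\psi+t^m\phi=t^{k-m}Tr'(\varphi), \]
whose Schur complement is $t^{m-k}(t^kI-\T_t^2)$; by hypothesis this is invertible, so $(\psi,\phi)$ exists uniquely and $\varphi_n:=\varphi-\varphi_o$ lies in $Ker(Tr)\cap Ker(Tr')=S^{1,new}_{k,m}(\Gamma_0(t))$. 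The same system with zero right-hand side forces $\psi=\phi=0$, so $S^{1,old}_{k,m}(\Gamma_0(t))\cap S^{1,new}_{k,m}(\Gamma_0(t))=0$.

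For the reverse implication $(\Rightarrow)$, I would argue by contrapositive: if $I-t^{-k}\T_t^2$ is not invertible then, since $\C_\infty$ is algebraically closed, $\T_t$ admits an eigenvector $\psi\neq 0$ with eigenvalue $\lambda\in\{\pm t^{k/2}\}$. The oldform $\varphi:=\lambda\delta_1(\psi)-t^{k-m}\delta_t(\psi)$ is a nonzero $\U_t$-eigenform of eigenvalue $\lambda$ (this is exactly the $\delta(\psi,-\frac{t^{k-m}}{\lambda}\psi)$ construction recalled in Section \ref{SecSet}), and a direct calculation with \eqref{EqTr}, \eqref{EqTr'} together with $\lambda^2=t^k$ shows $Tr(\varphi)=Tr'(\varphi)=0$; hence $\varphi$ is also a newform, contradicting the direct sum. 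The main technical obstacle is the explicit-formula step on oldforms: reconciling the matrix normalizations of $F$, $T$, $T'$ with the operator-level identities for $Tr$, $Tr'$, $Fr$, $\delta_1$, $\delta_t$ so that every power of $t$ and every sign lands correctly, after which the rest of the argument is pure linear algebra.
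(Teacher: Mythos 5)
Your proposal is correct: the inclusion $Ker(I-t^{-k}(TF)^2)\subseteq Im(T)=Ker(MA)=Im(\delta_1)$ obtained from $T^2=T$, the identity $L(\delta_1(\psi))=\delta_1\bigl((I-t^{-k}\T_t^2)\psi\bigr)$ (which rests on $TF$ acting on $Im(\delta_1)$ as $\delta_1\T_t$, exactly as in Remark \ref{RemTF}), the $2\times 2$ system with Schur complement $t^{m-k}(t^kI-\T_t^2)$, and the final check that $\delta(\psi,-\frac{t^{k-m}}{\lambda}\psi)$ lies in $Ker(Tr)\cap Ker(Tr')$ when $\lambda^2=t^k$ all verify. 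The skeleton coincides with the paper's proof — kernel of $L$ forced into oldforms by idempotency of $T$, a linear system over $S^1_{k,m}(GL_2(A))$ solved to produce the decomposition, and an explicit nonzero old-and-new form for the converse — but you route everything through the intermediate equivalence ``$L$ invertible $\iff$ $I-t^{-k}\T_t^2$ invertible at level one,'' i.e.\ $\T_t$ has no eigenvalue $\pm t^{k/2}$. The paper works instead directly with $T$ and $TF$: its system for the sum is literally yours rewritten via $TF\delta_1=\delta_1\T_t$, while for the converse it takes an arbitrary $\eta\in Ker(L)$, shows $T\eta=\eta$, and checks that $\U_t(\eta)=MD\eta$ is a nonzero element of $S^{1,old}_{k,m}(\Gamma_0(t))\cap S^{1,new}_{k,m}(\Gamma_0(t))$ — no eigenvector extraction, hence no appeal to factoring $X^2-t^k$ over $\C_\infty$, whereas you pick a genuine $\pm t^{k/2}$-eigenform. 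What your organization buys is that the reformulation in terms of level-one slopes (no $\T_t$-eigenvalue of slope $k/2$), which the paper only records afterwards in Remark \ref{RemTF}, is built into the proof itself; what the paper's converse buys is a slightly leaner argument that works with any kernel vector rather than an eigenvector.
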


\begin{proof} Assume $I-t^{-k}(TF)^2$ is invertible.
We begin by showing that the intersection between old and newforms is trivial. Let $\eta=\delta(\varphi,\psi)$ be old and new, then
(recall $\varphi,\psi\in S^1_{k,m}(GL_2(A))$ yields $T\varphi=\varphi$ and $T\psi=\psi$,
with a little abuse of notations we denote with the same symbol the modular forms and their associated coordinate vector)
\begin{itemize} 
\item $\eta=\varphi+t^{m-k}F\psi$;
\item $T\eta=T\varphi+t^{m-k}TF\psi=\varphi+t^{m-k}TF\psi=0 \Longrightarrow \varphi= -t^{m-k}TF\psi$;
\item $T'\eta=t^{m-k}(TF\eta)=t^{m-k}(TF\varphi+t^{m-k}TF^2\psi)=0$ implies
\[ 0=t^{m-k}(TF(-t^{m-k}TF\psi)+t^mT\psi)=t^{2m-k}(-t^{-k}(TF)^2\psi+\psi)=t^{2m-k}(-t^{-k}(TF)^2+I)\psi.\]
\end{itemize}
By hypothesis this leads to $\psi=0$, hence $\varphi=0$ and finally $\eta=0$ as well.

\noindent For the sum, given $\Psi\in  S^1_{k,m}(\Gamma_0(t))$ it is enough to find $\varphi,\psi\in  S^1_{k,m}(GL_2(A))$ such that
$\Psi-\delta(\varphi,\psi)$ is new, i.e.
\[ Tr(\Psi-\delta(\varphi,\psi))=Tr(\Psi)-\varphi-Tr(\delta_t(\psi))=Tr(\Psi)-\varphi-Tr(\psi^{Fr})=0 \]
and
\[ Tr'(\Psi-\delta(\varphi,\psi))=Tr'(\Psi)-Tr'(\varphi)-Tr'(\psi^{Fr})=Tr'(\Psi)-Tr'(\varphi)-t^{2m-k}\psi=0.\]
In terms of matrices these read as
\[ \left\{ \begin{array}{l} T\Psi-\varphi-t^{m-k}TF\psi =0 \\
TF\Psi-TF\varphi-t^m\psi=0 \end{array}\right. . \]
Assuming that $I-t^{-k}(TF)^2$ is invertible, we solve for $\varphi$ and $\psi$ getting
\[ \left\{ \begin{array}{l} \varphi=T\Psi-t^{m-k}TF\psi \\
\psi=t^{-m}(TF\Psi-TF(T\Psi-t^{m-k}TF\psi)) = t^{-m}TF(\Psi-T\Psi)+t^{-k}(TF)^2\psi \end{array}\right. \]
\[ \left\{ \begin{array}{l} \psi=(I-t^{-k}(TF)^2)^{-1} t^{-m}TF(\Psi-T\Psi) \\
\varphi=T\Psi-t^{m-k}TF t^{-m}(TF\Psi-TF\varphi)= T\Psi- t^{-k}(TF)^2\Psi+t^{-k}(TF)^2\varphi \end{array}\right. \]
\begin{equation}\label{EqSolve} \left\{ \begin{array}{l} \psi=(I-t^{-k}(TF)^2)^{-1} t^{-m}TF(\Psi-T\Psi) \\
\varphi=(I-t^{-k}(TF)^2)^{-1} (T\Psi - t^{-k}(TF)^2\Psi) \end{array}\right. . \end{equation}
Vice versa let $\eta\neq 0$ be in the kernel of $I-t^{-k}(TF)^2$, so that $TFTF\eta=t^k\eta$, and apply $T$ (recalling $T^2=T$)
to get $TFTF\eta=T^2FTF\eta=t^kT\eta$. This shows $T\eta=\eta$ so $\eta$ is old (and belongs to $Ker(MA)\,$).
Note that $MD\eta\neq 0$, otherwise $0\neq \eta\in Ker(MA)\cap Ker(MD)$: a contradiction
to the injectivity of $\delta$.
Equations \eqref{EqUtdelta1} and \eqref{EqUtdeltat} imply that $MD\eta$ (i.e. ${\bf U}_t(\eta)\,$) is old as well.
Finally
\begin{align*} t^k\eta & = (TF)^2\eta = TF(MD+F)\eta \\
\ & = TF(MD\eta)+TFF\eta = TF(MD\eta) +t^kT\eta  \\
\ & = TF(MD\eta)+t^k\eta .
\end{align*}
Therefore $TF(MD\eta)=0$ and we already noticed in Section \ref{SecTrMaps} that $TM=0$, i.e. $T(MD\eta)=0$ as well.
Hence $MD\eta \in Ker(T)\cap Ker(TF)=S^{1,new}_{k,m}(\Gamma_0(t))$,
\[ 0\neq MD\eta \in S^{1,old}_{k,m}(\Gamma_0(t))\cap S^{1,new}_{k,m}(\Gamma_0(t))\]
and we cannot have a direct sum between them.
\end{proof}

One can easily check that the formulas \eqref{EqSolve} are compatible with the possibility that $\Psi$ is old, i.e.
if $\Psi=\delta_1(\eta)$, then $T\Psi=\Psi$ so in equation \eqref{EqSolve} $\psi=0$ and 
$\varphi=(I-t^{-k}(TF)^2)^{-1} (I - t^{-k}(TF)^2)\eta=\eta$. A similar computation for $\Psi=\delta_t(\eta)=\eta^{Fr}=t^{m-k}F\eta$,
leads to (recall $T\eta=\eta$ and $F^2=t^kI$)
\[ \psi=(I-t^{-k}(TF)^2)^{-1} t^{-m}TF(t^{m-k}F\eta-T(t^{m-k}F\eta))=(I-t^{-k}(TF)^2)^{-1} (I - t^{-k}(TF)^2)\eta=\eta \]
and
\[ \varphi=(I-t^{-k}(TF)^2)^{-1} (T(t^{m-k}F\eta) - t^{-k}(TF)^2(t^{m-k}F\eta))=(I-t^{-k}(TF)^2)^{-1}
t^{m-k}TF(\eta-T\eta)=0.\] 

\begin{rem} 
The condition on the invertibility of the matrix $I-t^{-k}(TF)^2$ is computationally really easy to check. We did it using 
the software Mathematica (\cite{W}). In particular, we checked more than 1200 blocks for $q=2,3,2^2,5,7,2^3,3^2,11$ and  
$0\leqslant j\leqslant q-2$ and $n\leqslant 31$.
\end{rem}

\begin{rem}\label{RemTF}
In the proof of Theorem \ref{ThmSum} we saw that an element in $Ker(I-t^{-k}(TF)^2)$
has to be old. Let $\varphi\in S^1_{k,m}(GL_2(A))$, then
\[  \delta_1\T_t(\varphi)=t^{k-m}\delta_t(\varphi)+ \U_t(\delta_1(\varphi)) = (F+MD)\varphi =TF\varphi \,. \]
Moreover, observe that $I-t^{-k}(TF)^2=(I-t^{-k/2}TF)(I+t^{-k/2}TF)$, so that
$\varphi\in Ker(I-t^{-k}(TF)^2)$ leads to
\[ TF\varphi=-t^{k/2}\varphi\quad{\rm or}\quad TF((I+t^{-k/2}TF)\varphi)=
t^{k/2}(I+t^{-k/2}TF)\varphi.\] 
Therefore, $S^1_{k,m}(\G_0(t))$ is direct sum of oldforms 
and newforms if and only if there does not exist $\eta\in S^1_{k,m}(GL_2(A))$ eigenform of eigenvalue $\pm t^{k/2}$ for $\T_t$.\\
Our computations (see tables at https://sites.google.com/site/mariavalentino84/publications) always provided slopes at level one that are strictly less than $k/2$. Therefore, proving that $k/2$ is un upper bound for slopes of $\T_t$ would immediately
prove also the conjecture on $S^1_{k,m}(\Gamma_0(t))$ being direct sum of new and old forms.
\end{rem}

\subsection{Antidiagonal blocks and newforms}\label{SecAntidiagonalNewforms}
Most of the computations of this section and of the following one will rely on the well known

\begin{lem}\label{KummerThm}(Lucas' Theorem)
Let $n,m\in\N$ with $m\leqslant n$ and write their $p$-adic expansions as
$n=n_0+n_1p+\dots +n_d p^d$, $m=m_0+m_1 p + \dots +m_d p^d$. Then
\[ \binom{n}{m} \equiv \binom{n_0}{m_0} \binom{n_1}{m_1} \dots \binom{n_d}{m_d} \pmod p.\]
\end{lem}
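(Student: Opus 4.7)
The plan is to give the standard generating-function proof of Lucas' theorem, which is short and self-contained. The key observation is the Frobenius-type identity $(1+X)^p \equiv 1+X^p \pmod p$ in $\F_p[X]$, a consequence of the fact that $\binom{p}{j}\equiv 0 \pmod p$ for $1\leqslant j\leqslant p-1$. Iterating this identity gives $(1+X)^{p^i} \equiv 1+X^{p^i} \pmod p$ for every $i\geqslant 0$.

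Next I would write $n$ in its base-$p$ expansion and decompose the binomial generating polynomial accordingly. Namely, in $\F_p[X]$ one has
\[
(1+X)^n = \prod_{i=0}^d (1+X)^{n_i p^i} \equiv \prod_{i=0}^d \bigl(1+X^{p^i}\bigr)^{n_i} \pmod p,
\]
and expanding each factor gives $\prod_{i=0}^d \sum_{j_i=0}^{n_i}\binom{n_i}{j_i} X^{j_i p^i}$.

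The final step is to extract the coefficient of $X^m$ on both sides. On the left, this coefficient is $\binom{n}{m}$. On the right, a monomial $X^{j_0 + j_1 p + \dots + j_d p^d}$ with $0\leqslant j_i\leqslant n_i <p$ equals $X^m$ if and only if $j_i = m_i$ for all $i$, by the uniqueness of the base-$p$ representation of $m$ (and if some $m_i > n_i$, the corresponding term simply does not appear, which matches the convention $\binom{n_i}{m_i}=0$ in that case). Comparing coefficients therefore yields the congruence $\binom{n}{m} \equiv \prod_{i=0}^d \binom{n_i}{m_i} \pmod p$.

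Essentially no obstacle arises here: the argument is a one-line application of the Freshman's dream once the base-$p$ expansions are introduced, and the only mild care needed is handling the degenerate case $m_i > n_i$, which is absorbed into the convention that the corresponding binomial vanishes.
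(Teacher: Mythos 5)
Your argument is correct and complete: the Frobenius identity $(1+X)^p\equiv 1+X^p\pmod p$, iterated to $(1+X)^{p^i}\equiv 1+X^{p^i}$, the factorization of $(1+X)^n$ along the base-$p$ digits of $n$, and the coefficient extraction using uniqueness of base-$p$ representations (including the degenerate case $m_i>n_i$, which you rightly absorb into the convention $\binom{n_i}{m_i}=0$) together constitute the standard and fully rigorous generating-function proof of Lucas' theorem. The paper itself does not prove the lemma at all: its ``proof'' consists of the citation ``See \cite{DW} or \cite{Gr}'', deferring to Davis--Webb and Granville (who in fact treat the more general prime-power refinements). So there is no internal argument to compare against; your proposal supplies a self-contained proof where the authors were content to quote the literature, and the route you chose is the one most commonly given. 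The only alternative classical approach worth mentioning is the combinatorial/group-action proof (counting $m$-subsets of an $n$-set under an action of a product of cyclic $p$-groups, or equivalently Kummer's carry-counting argument), which generalizes more readily to the prime-power statements of the cited references; for the mod-$p$ statement actually used in the paper, your polynomial identity argument is the shortest and entirely adequate.
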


\begin{proof}
See \cite{DW} or \cite{Gr}.
\end{proof}

With notations as in \cite{Cor}, consider the modular form $g\in M_{q-1,0}(GL_2(A))$ and the cusp form
$h\in M_{q+1,1}(GL_2(A))$ which generate $M(GL_2(A))$, i.e., such that
$\bigoplus_{k,m}M_{k,m}(GL_2(A))\simeq \C_\infty [g,h]$ (see \cite[Proposition 4.6.1]{Cor}), where the polynomial ring
is intended doubly graded by weight and type.
When $n\leqslant j+1$ (and still $k=2j+2+(n-1)(q-1)$), using \cite[Proposition 4.3]{Cor} one finds that
the space $M_{k,m}(GL_2(A))$ is zero unless $k=q(q-1)$ and $m=0$ (i.e., $j=q-2$) when it is generated
by the (non-cuspidal) form $g^q$. Moreover, when $n=j+2$ we have
\[ \dim_{\C_\infty} M_{k,m}(GL_2(A)) = \left\{
\begin{array}{ll} 1 & {\rm if}\ j<q-2\ {\rm (generated\ by\ } h^{j+1}{\rm )} \\
2 & {\rm if}\ j=q-2\ {\rm (generated\ by\ } \{h^{q-1},g^{q+1}\}{\rm )} \end{array}\right. . \]
Hence for $n\leqslant j+1$ we do not have oldforms and Conjecture {\bf 1} is trivial. For Conjecture {\bf 3} we have to prove that
all forms in $S^1_{k,m}(\Gamma_0(t))$ are new (obviously they cannot arise in any way from forms of lower level but we have to check
that they are new according to our Definition \ref{DefNewforms}).

\begin{thm}\label{ThmAntidiagonal}
Let $n\in\N$ and $0\leqslant j\leqslant q-2$. Then, for all $n\leqslant j+1$, the matrix $M=M(j,n,q)$ is antidiagonal
(since now $j$, $n$ and $q$ may vary we often put this 3 parameter explicitly in the notation for the matrix $M$).
\end{thm}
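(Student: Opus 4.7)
My plan is to prove the theorem by a direct application of Lucas' theorem (Lemma~\ref{KummerThm}) to the explicit entries of $M$ given in formula~\eqref{spam}. Write $q = p^s$, and for off-diagonal positions set $N = j+(n-a)(q-1)$, $K_1 = j+(n-b)(q-1)$, $K_2 = j+(b-1)(q-1)$. The crucial point is that in the regime $n \leqslant j+1$ all of $n-a$, $n-b$, $a-1$, $b-1$ lie in $[0,j] \subseteq [0,q-2]$, so that
\[ N = (j-(n-a)) + (n-a)\,p^s, \qquad j-(n-a),\ n-a \in [0,q-1], \]
and similarly for $K_1$ and $K_2$. Consequently the base-$p$ digits of $N$ split cleanly into a low block of $s$ digits encoding $j-(n-a)$ and a high block of $s$ digits encoding $n-a$, and likewise for $K_1$ and $K_2$.

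The workhorse will be the elementary observation that if $0 \leqslant x < y$ are integers, then in at least one base-$p$ position the digit of $y$ strictly exceeds the digit of $x$ (else $x \geqslant y$ by digitwise comparison). Combined with Lucas, this shows that whenever the low $s$-digit block (or the high $s$-digit block) of $K_i$, read as an integer, strictly exceeds the corresponding block of $N$, the binomial $\binom{N}{K_i}$ vanishes in characteristic $p$.

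For off-diagonal entries ($a \neq b$) I will handle the two binomials in turn. For $\binom{N}{K_1}$: when $a > b$ one has $K_1 > N$ as integers, so the binomial is $0$; when $a < b$ the low blocks satisfy $j-(n-b) > j-(n-a)$, forcing vanishing by the digit lemma and Lucas. For $\binom{N}{K_2}$: the sign of $a+b-(n+1)$ dictates which block of $K_2$ overtakes the corresponding block of $N$ (the low block if $a+b < n+1$, the high block if $a+b > n+1$), so the binomial vanishes except in the antidiagonal case $a+b = n+1$, where $N = K_2$ and it equals $1$. An identical argument handles the diagonal entries $m_{a,a} = (-1)^j \binom{N}{K}$ with $K = j+(a-1)(q-1)$: both blocks match iff $2a = n+1$, otherwise one of them fails, so the only surviving diagonal entry is the central one for odd $n$, which lies on the antidiagonal.

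There is no serious obstacle: the whole argument is a careful digit-by-digit bookkeeping, and the hypothesis $n \leqslant j+1$ is used exactly to guarantee that neither ``carrying'' between the two $s$-digit blocks nor negativity of $j-(n-a)$ or $j-(b-1)$ occurs. As a byproduct one can also read off the surviving antidiagonal entries in a form consistent with the symmetry~S3 noted earlier, though this is not needed for the statement.
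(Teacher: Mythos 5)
Your proposal is correct and follows essentially the same route as the paper: both rest on Lucas' theorem (Lemma~\ref{KummerThm}) applied to the entries of~\eqref{spam}, using that under $n\leqslant j+1$ each quantity $j+\ell(q-1)$ splits cleanly into a low base-$p$ block $j-\ell$ and a high block $\ell$, both in $[0,q-1]$, so that a digit comparison kills every binomial off the antidiagonal. The only (harmless) difference is organizational: the paper first invokes the symmetries S1--S4 to reduce to the columns $b\leqslant \frac{n}{2}$ above the antidiagonal, whereas you check all entries directly, which your block-by-block case analysis handles just as well.
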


\begin{proof}
Thanks to the symmetries of the matrices $M(j,n,q)$, we simply need to check the general
$b$-th column for $1\leqslant b \leqslant \frac{n}{2}$ (or $\leqslant \frac{n+1}{2}$ according to the parity of $n$)
and above the antidiagonal (i.e. for $b<n+1-a$).\\
Let us start with the elements on the diagonal. We rewrite them as
\[  m_{a,a}=(-1)^{j}\binom{(n-a)q +j +a-n}{(a-1)q+j-a+1}\,.  \]
Our hypotheses on $j$ and $n$ yield $0\leqslant j+a-n,j-a+1 < q=p^r$, hence, in order to use Lemma \ref{KummerThm}, we can write the
$p$-adic expansion of the terms in the binomial coefficient as
\begin{align*}
(n-a)q+j+a-n & =\a_0+\a_1 p+\dots+\a_{r-1}p^{r-1}+(n-a)p^r; \\
(a-1)q+j-a+1 & =\delta_0+\delta_1 p+\dots+\delta_{r-1}p^{r-1}+(a-1)p^r.
\end{align*}
If there exists $i$ such that $\a_i<\delta_i$, then $\binom{\a_i}{\delta_i}=0$ and $m_{a,a}$ is zero. Otherwise, if
for any $i$ one has $\a_i\geqslant\delta_i$, then $j+a-n\geqslant j+1-a$, i.e. $a-1\geqslant n-a$. Again we get $m_{a,a}=0$
unless $a=\frac{n+1}{2}$ where we have already seen that $m_{\frac{n+1}{2},\frac{n+1}{2}}=(-1)^{j}$.

The other $m_{a,b}$ are
\[ m_{a,b}= -\left[ \binom{(n-a)q+j+a-n}{(n-b)q+j+b-n} + (-1)^{j+1}\binom{(n-a)q+j+a-n}{(b-1)q+j-b+1}\right] \,.\]
As before, $j+a-n,j-b+1,j+b-n < q$ and all of them are non-negative. Thus, the $p$-adic expansions
of the terms involved in the coefficients are
\begin{align*} (n-a)q+j+a-n & =\a_0+\a_1 p+\dots+\a_{r-1}p^{r-1}+(n-a)p^r; \\
(b-1)q+j-b+1&=\b_0+\b_1 p+\dots+\b_{r-1}p^{r-1}+(b-1)p^r; \\
(n-b)q+j+b-n&=\g_0+\g_1 p+\dots+\g_{r-1}p^{r-1}+(n-b)p^r.
\end{align*}
By Lemma \ref{KummerThm} we have that
\begin{itemize}
\item if there exists $i$ such that $\a_i<\g_i$, then the first binomial coefficient is zero;
\item if there exists $i$ such that $\a_i<\b_i$, then the second binomial coefficient is zero.
\end{itemize}
Otherwise, if for any $i$ one has $\a_i\geqslant\g_i$, then $j+a-n\geqslant j+b-n$ and $n-b\geqslant n-a$.
This implies that the first binomial coefficient is zero
(observe that the equality $a=b$ cannot happen here, the entry $m_{a,a}$ has already been treated).\\
For the second coefficient assume that for any $i$ one has $\a_i\geqslant \b_i$, then
$j+a-n\geqslant j-b+1$, i.e., $b\geqslant n+1-a$, a contradiction to our assumption of being above the antidiagonal.\end{proof}

\begin{cor}\label{CorDiagAntidiag}
Let $n\in\N$ and $0\leqslant j\leqslant q-2$. Then, for all $2\leqslant n\leqslant j+1$, the matrix associated with $\U_t$, i.e.
$MD:=M(j,n,q,t)$, is diagonalizable if and only if $q$ is odd.
\end{cor}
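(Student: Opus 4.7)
The plan is to exploit Theorem~\ref{ThmAntidiagonal} together with the symmetries of Section~\ref{SecSymmetry} to pin down $MD$ explicitly, and then to settle diagonalizability by inspecting its minimal polynomial.

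First I would identify all antidiagonal entries of $M$ as equal to $(-1)^j$. Symmetry S3 gives this directly for the rows $a \geqslant \frac{n}{2}+1$ and also forces $m_{a,a}=0$ there; for the upper rows, antidiagonality of $M$ (Theorem~\ref{ThmAntidiagonal}) says $m_{a,a}=0$ too, and S2 then turns this into $m_{a,n+1-a}=(-1)^{j+1}(0-1)=(-1)^j$. For odd $n$ the central entry $m_{\frac{n+1}{2},\frac{n+1}{2}}$ lies on the antidiagonal as well; specializing \eqref{spam} at $a=\frac{n+1}{2}$ gives $(-1)^j\binom{j+\frac{n-1}{2}(q-1)}{j+\frac{n-1}{2}(q-1)}=(-1)^j$. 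Consequently $MD$ is antidiagonal with $(a,n+1-a)$-entry $(-1)^j t^{s_{n+1-a}}$.

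The square of an antidiagonal matrix is diagonal, and a direct computation using $s_a+s_{n+1-a}=2j+2+(n-1)(q-1)=k$ yields
\[
(MD)^2 = t^{k}\,I.
\]
So the minimal polynomial of $MD$ divides $X^2-t^k\in \C_\infty[X]$. Since $MD$ is antidiagonal of size $n\geqslant 2$ with nonzero antidiagonal entries, it is not a scalar matrix, so its minimal polynomial is of degree $\geqslant 2$ and must therefore coincide with $X^2-t^k$.

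To conclude I split on the parity of $q$. If $q$ is odd then $q-1$ is even, so $k$ is even and $X^2-t^k=(X-t^{k/2})(X+t^{k/2})$ factors with two distinct roots (the characteristic is odd); the minimal polynomial of $MD$ is thus separable, and $MD$ is diagonalizable. If $q$ is even then we work in characteristic $2$, and letting $\alpha\in\C_\infty$ be the unique square root of $t^k$ we obtain $X^2-t^k=(X+\alpha)^2$; this polynomial is inseparable, so $MD$ is not diagonalizable. The only non-mechanical step is the first one, confirming that the antidiagonal entries of $M$ really are uniformly $(-1)^j$; once $(MD)^2=t^k I$ is in hand, the rest is straightforward linear algebra, and no genuine obstacle is expected.
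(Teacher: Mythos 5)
Your proof is correct and follows essentially the route the paper intends: Theorem \ref{ThmAntidiagonal} plus the symmetry relations identify $MD$ as antidiagonal with entries $(-1)^j t^{s_{n+1-a}}$, whence $(MD)^2=t^kI$ and the minimal polynomial $X^2-t^k$ has distinct roots exactly when the characteristic is odd (the paper leaves this linear-algebra step implicit, remarking only that such antidiagonal matrices with sole eigenvalue $\pm t^{k/2}$ fail to be diagonalizable in even characteristic). Your explicit verification that the antidiagonal entries are uniformly $(-1)^j$ and the minimal-polynomial argument are exactly the missing details, so nothing further is needed.
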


\begin{exe} \label{Exn=j+2o3}
{\em The following matrices show that the bound in Theorem \ref{ThmAntidiagonal} is sharp, i.e. the appearance of
oldforms causes a non-antidiagonal action. For $q=8$, $j=3, 6$ we have }
\[ M(3,5,8)=\left( \begin{array}{ccccc}
1 & 0 & 0 & 0 & 0 \\
0 & 0 & 0 & 1 & 0 \\
0 & 0 & 1 & 0 & 0 \\
0 & 1 & 0 & 0 & 0 \\
1 & 0 & 0 & 0 & 0 \end{array}\right) \ {\rm and} \
M(6,8,8)=\left(\begin{array}{cccccccc} 1 & 1 & 1 & 1 & 1 & 1 & 1 & 0\\
0 & 0 & 0 & 0 & 0 & 0 & 1 & 0\\
0 & 0 & 0 & 0 & 0 & 1 & 0 & 0\\
0 & 0 & 0 & 0 & 1 & 0 & 0 & 0\\
0 & 0 & 0 & 1 & 0 & 0 & 0 & 0\\
0 & 0 & 1 & 0 & 0 & 0 & 0 & 0\\
0 & 1 & 0 & 0 & 0 & 0 & 0 & 0\\
1 & 0 & 0 & 0 & 0 & 0 & 0 & 0 \end{array}\right).\]
\end{exe}

\begin{thm}\label{AntidiagAreNew}
If $n\leqslant j+1$ all Conjectures \ref{NewConj} hold.
\end{thm}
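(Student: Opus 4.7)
The plan is to leverage Theorem~\ref{ThmAntidiagonal} to reduce each of the three conjectures to a short linear algebra computation. Under the hypothesis $n\leqslant j+1$, that theorem together with the symmetries S1--S4 of Section~\ref{SecSymmetry} forces $M$ to have $(-1)^j$ on every antidiagonal entry (including the central entry $m_{\frac{n+1}{2},\frac{n+1}{2}}=(-1)^j$ when $n$ is odd, and, when $a<\frac{n+1}{2}$, $m_{a,n+1-a}=(-1)^{j+1}(m_{a,a}-1)=(-1)^j$ since Theorem~\ref{ThmAntidiagonal} gives $m_{a,a}=0$) and zero elsewhere. Hence $M=-A$, where $A$ is the antidiagonal matrix of Section~\ref{SecSymmetry}, and a direct computation yields $A^2=I$. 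Therefore $MA=-I$ and $I+MA=0$.

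With these two identities the three conjectures follow immediately. For Conjecture~{\bf 1}, $Ker(MA)=0$ makes both intersections trivial. For Conjecture~{\bf 3}, $M=-A$ is invertible, hence so is $MD$; combined with $Ker(MA)=0$ this gives $S^{1,old}_{k,m}(\Gamma_0(t))=Ker(MA)\oplus Ker(MD)=0$. Meanwhile $Ker(I+MA)$ fills the whole block and is $F$-stable (since $F$ is invertible), so $S^{1,new}_{k,m}(\Gamma_0(t))$ also fills the whole block and the decomposition holds trivially.

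For Conjecture~{\bf 2}, I would compute that the antidiagonal matrix $MD$ has entries $(MD)_{a,n+1-a}=(-1)^j t^{s_{n+1-a}}$, so $(MD)^2$ is diagonal with entries $t^{s_a+s_{n+1-a}}=t^k$, i.e.\ $(MD)^2=t^k I$. When $q$ is odd, $k$ is even and $X^2-t^k$ has distinct roots $\pm t^{k/2}$, so $MD$ is diagonalizable on each block. When $q$ is even, $X^2-t^k=(X-c)^2$ for the unique square root $c$ of $t^k$ in $\C_\infty$, and the only way a matrix $B$ with $(B-cI)^2=0$ can be diagonalizable is $B=cI$; since $MD$ is antidiagonal, this forces $n=1$. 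As the whole block is made of newforms in the antidiagonal regime, the condition $n=1$ is exactly $\dim_{\C_\infty}S^{1,new}_{k,m}(\Gamma_0(t))\leqslant 1$.

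One final piece of bookkeeping is needed when $q$ is odd, since $S^1_{k,m}(\Gamma_0(t))$ then splits into the two blocks $C_j$ and $C_{j+(q-1)/2}$. The second block has dimension $n-1$ and parameter $j'=j+(q-1)/2$, and the hypothesis $n\leqslant j+1$ yields $n-1\leqslant j'+1$, so Theorem~\ref{ThmAntidiagonal} applies there as well and the block-wise arguments combine into the global statement. No serious obstacle arises; the only verification requiring care is the identity $M=-A$, which needs the case analysis on the parity of $n$ when applying the symmetries S2--S3.
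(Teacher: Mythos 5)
Your proof is correct and follows essentially the same route as the paper: Theorem \ref{ThmAntidiagonal} gives $M=-A$, hence $MA=-I$, $MD=-F$ and $T=T'=0$, so the whole space is new, the old part vanishes, and diagonalizability is read off from $(MD)^2=t^kI$. The only cosmetic difference is that you reprove Corollary \ref{CorDiagAntidiag} and the vanishing of $Ker(MA)$ directly from the matrix identities (and you spell out the second block $C_{j+\frac{q-1}{2}}$ for odd $q$), where the paper simply cites the corollary and the earlier observation that there are no level-one cusp forms in this range.
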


\begin{proof}
We already mentioned that Conjectures {\bf 1} and {\bf 2} hold (trivially or by Corollary \ref{CorDiagAntidiag}).
By Theorem \ref{ThmAntidiagonal} we know that $M=M(j,n,q)$ is antidiagonal. In particular
\[  M=\left( \begin{array}{ccc}
0 & \cdots & (-1)^j\\
 & \iddots & \\
(-1)^j & \cdots & 0
\end{array}\right) \]
Hence $MA=-I$ and $MD=-F$, i.e. $T=I+MA$ and $T'=F+MD$ are both the null matrix and 
$S^1_{k,m}(\Gamma_0(t))=S^{1,new}_{k,m}(\Gamma_0(t))$ (note that, by Theorem \ref{ThmSum}, this provides another
proof of the fact that $S^{1,old}_{k,m}(\Gamma_0(t))=0$).
\end{proof}

It seems relevant to notice that all the eigenforms involved in an antidiagonal block are newforms (i.e. this holds even if the whole matrix 
is not antidiagonal). Indeed the existence of an antidiagonal block yields equations like
\[ U_t(\c_{j+(h-1)(q-1)})= (-1)^j t^{j+1+(h-1)(q-1)}\c_{k-2-j-(h-1)(q-1)} \]
(for the cocycles involved in the block) and we recall equation \eqref{EqFrCoc}
\[ \c_i^{Fr}=(-1)^{i+1}t^{i+1+m-k} \c_{k-2-i} .\]
Substituting in equations \eqref{EqTr} one gets
\begin{align*}
Tr(\c_{j+(h-1)(q-1)}) & =\c_{j+(h-1)(q-1)} + t^{-m}\U_t(\c_{j+(h-1)(q-1)}^{Fr}) \\
\ & = \c_{j+(h-1)(q-1)} + (-1)^{j+1+(h-1)(q-1)} t^{j+1+(h-1)(q-1)+m-k-m}\U_t(\c_{j+(n-h)(q-1)}) \\
\ & = \c_{j+(h-1)(q-1)} + (-1)^{j+1} t^{j+1+(h-1)(q-1)-k}(-1)^j t^{j+1+(n-h)(q-1)}\c_{j+(h-1)(q-1)}  = 0 
\end{align*} 
(where the last equality follows also from $k=2j+2+(n-1)(q-1)\,$). The computations to show
$Tr'(\c_{j+(h-1)(q-1)})=0$ are similar (substituting in $\eqref{EqTr'}$).

\subsection{Three more cases: $j=0$, $n=j+2$ and $n\leqslant 4$}\label{Cor-j=0&n=j+2}
We briefly describe a few more cases in which our Theorem \ref{ThmSum} and the particular form of the matrices lead to
a proof of all the conjectures.

\begin{thm}\label{Thmj=0}
Let $n\in\N$ with $n\geqslant 2$ and $j=0$. Then, for all $n\leqslant q+2$,
the matrix $M(0,n,q)$ has the following entries
\begin{enumerate}
\item {$m_{a,1}=1$ for $1\leqslant a\leqslant n$;}
\item {$m_{a,b}=0$ for $1\leqslant a\leqslant n-2$, $2\leqslant b\leqslant \frac{n}{2}$ (or $\frac{n+1}{2}$
depending on the parity of $n$) and $b<n+1-a$,}
\end{enumerate}
i.e.,
\[ M(0,n,q)=\left( \begin{array}{cccccc}
1 & 0 & \cdots & \cdots & 0 & 0 \\
1 & 0 & \cdots & 0 & 1 & -1 \\
\vdots & \vdots & \ & \udots & 0 & \vdots \\
\vdots & 0 & \udots &  & \vdots & \vdots \\
1 & 1 & 0 & \cdots & 0 & -1 \\
1 & 0 & \cdots & \cdots & 0 & 0 \end{array}\right)\,.\]
\end{thm}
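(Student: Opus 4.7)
Part (1) is a direct evaluation. For $a = 1$, the diagonal formula gives $m_{1,1} = \binom{(n-1)(q-1)}{0} = 1$; for $a \geq 2$, the off-diagonal formula with $b = 1$ reduces to
\[
m_{a,1} = -\binom{(n-a)(q-1)}{(n-1)(q-1)} + \binom{(n-a)(q-1)}{0} = 0 + 1 = 1,
\]
since $n - a < n - 1$ forces the first binomial to vanish. The case $a = 1$ of part (2) is equally immediate: the two lower indices $(n-b)(q-1)$ and $(b-1)(q-1)$ appearing in $m_{1,b}$ sum to $(n-1)(q-1)$, so the symmetry $\binom{M}{k} = \binom{M}{M-k}$ makes the two binomials coincide, and with $j = 0$ they enter with opposite signs and cancel.

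The core is the case $a \geq 2$. Setting $N := n - a$, the hypothesis $n \leq q + 2$ together with $2 \leq a \leq n-2$ gives $2 \leq N \leq q$. The technical engine I would establish is the following lemma:
\[
\binom{N(q-1)}{s(q-1)} \equiv 0 \pmod p \quad \text{for all } 1 \leq N \leq q \text{ and } 1 \leq s \leq N - 1.
\]
Granting the lemma, each binomial appearing in $m_{a,b}$ vanishes. For the off-diagonal entries $a \neq b$, the constraints $b \geq 2$ and $b < n + 1 - a$ place $s_2 := b - 1$ in $[1, N-1]$, while $s_1 := n - b$ either lies in $[1, N-1]$ (when $b > a$) or satisfies $s_1 > N$ and kills $\binom{N(q-1)}{s_1(q-1)}$ by range (when $b < a$). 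For the diagonal entries $a = b$, which enter the range of (2) only for $2 \leq a \leq n/2$, the lemma applies to $m_{a,a} = \binom{N(q-1)}{(a-1)(q-1)}$ with $s = a - 1 \in [1, N-1]$, using $a \leq n/2 \leq N$.

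To prove the lemma I would split according to whether $N \leq q - 1$ or $N = q$. For $N \leq q - 1$, I would write $N(q-1) = (N-1)q + (q - N)$ and apply Chu--Vandermonde; the fact that $\binom{(N-1)q}{i} \equiv 0 \pmod p$ unless $q \mid i$, and equals $\binom{N-1}{i/q}$ by Lucas in that case, leaves only the term $i = (s-1)q$ and produces
\[
\binom{N(q-1)}{s(q-1)} \equiv \binom{N-1}{s-1}\binom{q-N}{q-s} \pmod p,
\]
whose second factor is zero for $s < N$. For $N = q$, I would invoke Lemma \ref{KummerThm} directly: $q(q-1) = p^r(p^r - 1)$ has base-$p$ digits $(0,\ldots,0,p-1,\ldots,p-1)$ with $r$ leading zeros, while for $1 \leq s \leq q - 1$ the base-$p$ expansion of $s(q-1) = sp^r - s$, computed by performing the subtraction with borrows, carries a nonzero digit $p - c_{v_p(s)}$ at position $v_p(s) < r$; hence some factor in the Lucas product is of the form $\binom{0}{d}$ with $d \geq 1$, forcing the whole product to vanish.

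The main obstacle I anticipate is the base-$p$ digit bookkeeping needed to handle the $N = q$ case of the lemma, together with keeping the many index inequalities on $a$, $b$, $s_1$, $s_2$ straight; the $N \leq q - 1$ case of the lemma, by contrast, is essentially one line once the Vandermonde reduction is written down.
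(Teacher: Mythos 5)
Your proof is correct and follows the same route as the paper, whose entire argument is the one-line instruction to apply Lucas' theorem (Lemma \ref{KummerThm}) repeatedly to the explicit entries \eqref{spam}, as in the proof of Theorem \ref{ThmAntidiagonal}. Your write-up simply supplies the details the paper omits — the direct evaluation of the column $b=1$, the symmetry cancellation $\binom{M}{k}=\binom{M}{M-k}$ in the row $a=1$, and the key divisibility $\binom{N(q-1)}{s(q-1)}\equiv 0 \pmod p$ for $1\leqslant s\leqslant N-1$, $N\leqslant q$ (obtained via a Vandermonde reduction for $N\leqslant q-1$ and base-$p$ digit comparison for $N=q$) — with the hypothesis $n\leqslant q+2$ entering exactly where it must, namely through $N=n-a\leqslant q$, consistently with the sharpness example $M(0,6,3)$.
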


\begin{proof}
We need just to apply repeatedly Lemma \ref{KummerThm} as done in the proof of Proposition \ref{ThmAntidiagonal}.
\end{proof}

\begin{exe}
As before we can show that the bound on $n$ is the best possible: indeed
\[ M(0,6,3)=\left( \begin{array}{cccccc}
1 & 0 & 0 & 0 & 0 & 0\\
1 & 1 & 0 & 0 & 0 & -1\\
1 & 0 & 0 & 1 & 0 & -1\\
1 & 0 & 1 & 0 & 0 & -1\\
1 & 1 & 0 & 0 & 0 & -1\\
1 & 0 & 0 & 0 & 0 & 0
\end{array}\right)\]

\end{exe}

\begin{cor}\label{Corj=0}
With hypotheses as in Theorem \ref{Thmj=0} we have that all Conjectures \ref{NewConj} hold.
\end{cor}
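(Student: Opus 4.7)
The plan is to exploit the remarkably clean structure of $M=M(0,n,q)$ revealed by Theorem \ref{Thmj=0}. Setting $\mathbf{u}:=(1,\ldots,1,0)^T$, $\mathbf{v}:=(1,0,\ldots,0,-1)^T$ and letting $J_0$ denote the $n\times n$ antidiagonal identity, one reads off from Theorem \ref{Thmj=0} that
\[ M = J_0 + \mathbf{u}\mathbf{v}^T, \]
i.e.\ $M$ is a rank-one perturbation of $J_0$. Combined with $A=-J_0$, $F=-J_0D$ (the sign being irrelevant in characteristic two), $(J_0D)^2=t^kI$ (a consequence of $s_a+s_{n+1-a}=k$ for all $a$), and $J_0\mathbf{v}=-\mathbf{v}$, a short calculation produces the identities
\[ MA=\mathbf{u}\mathbf{v}^T-I,\qquad T=I+MA=\mathbf{u}\mathbf{v}^T,\qquad F+MD=\mathbf{u}(D\mathbf{v})^T. \]

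With these in hand Conjecture \textbf{1} is immediate: $Ker(MA)=\C_\infty\mathbf{u}$, while $Ker(F+MD)$ is the hyperplane $\{x:tx_1-t^{k-1}x_n=0\}$; since any $c\mathbf{u}$ has last coordinate $0$, the hyperplane condition collapses to $tc=0$, hence $c=0$. The identical computation, using $F^{-1}=t^{-k}F$, handles also $Ker(MA)\cap F(Ker(I+MA))$. Conjecture \textbf{3} follows from Theorem \ref{ThmSum}: since a direct check on the sparse matrix $F$ gives $\mathbf{v}^TF\mathbf{u}=t$, the rank-one operator $TF=\mathbf{u}(\mathbf{v}^TF)$ satisfies $(TF)^2=t\cdot TF$, so
\[ I-t^{-k}(TF)^2=I-t^{1-k}\,TF \]
is a rank-one perturbation of the identity, invertible iff $t^{2-k}\neq 1$, which is automatic for $n\geqslant 2$ (equivalently $k>2$).

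For Conjecture \textbf{2} I would compute the characteristic polynomial of $U=MD=\mathbf{u}(D\mathbf{v})^T+J_0D$ via the matrix-determinant lemma, using $(J_0D-\lambda I)^{-1}=(J_0D+\lambda I)/(t^k-\lambda^2)$ together with the scalars $(D\mathbf{v})^T\mathbf{u}=t$ and $(D\mathbf{v})^TJ_0D\mathbf{u}=-t^k$. A short simplification yields
\[ \det(U-\lambda I)=\lambda(\lambda-t)(\lambda^2-t^k)^{n/2-1}\qquad (n\text{ even}),\]
and the analogous factorization $-\lambda(\lambda-t)(\lambda-t^{k/2})^{(n-1)/2}(\lambda+t^{k/2})^{(n-3)/2}$ for $n$ odd. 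In odd characteristic the eigenvalues $0,t,\pm t^{k/2}$ are pairwise distinct (since $k>2$), and a direct computation shows that the $\pm t^{k/2}$-eigenspaces of $U$ (intersections of the $J_0D$-eigenspaces with the hyperplane $\{(D\mathbf{v})^Tx=0\}$) have dimensions matching their algebraic multiplicities, so $U$ is diagonalizable. In characteristic two, the collision $t^{k/2}=-t^{k/2}$ merges the two into a single eigenvalue of algebraic multiplicity $n-2$ and geometric multiplicity $\lceil n/2\rceil-1$; these agree precisely when $n-2\leqslant 1$, i.e.\ when $\dim_{\C_\infty}S^{1,new}_{k,1}(\G_0(t))\leqslant 1$, matching the conjectural criterion. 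The only real obstacle will be this bookkeeping at $\pm t^{k/2}$, where the parity of $n$ and the characteristic of $\F_q$ interact; otherwise every step reduces to elementary rank-one linear algebra.
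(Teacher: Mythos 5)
Your proposal is correct and follows essentially the same route as the paper's proof: the decomposition $M(0,n,q)=J_0+\mathbf{u}\mathbf{v}^T$ is just a compact encoding of the matrix of Theorem \ref{Thmj=0}, your rank-one computations reproduce exactly the matrices $T$ and $TF$ displayed in the paper, the invertibility of $I-t^{-k}(TF)^2$ is fed into Theorem \ref{ThmSum} as in the paper, and your kernel computations and characteristic polynomials agree with the ones given there. The only difference is cosmetic: you settle diagonalizability by counting eigenspaces of the rank-one perturbation of $J_0D$ (and the char-$2$ bookkeeping you defer does check out, since $\mathrm{Im}(J_0D-t^{k/2}I)\subseteq\mathrm{Ker}(J_0D-t^{k/2}I)$ forces every $t^{k/2}$-eigenvector of $MD$ into that kernel, whose intersection with the hyperplane $\{(D\mathbf{v})^Tx=0\}$ has dimension $\lceil n/2\rceil-1$), whereas the paper invokes the central antidiagonal block — both arguments rest on the same characteristic polynomial.
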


\begin{proof}
We have
\[ T=\left( \begin{array}{ccccc}
1 & 0 & \cdots &  0 & -1 \\
\vdots & \vdots &  & \vdots & \vdots \\
1 & 0 & \cdots & 0 & -1 \\
0 & 0 & \cdots & 0 & 0 \end{array}\right)\quad{\rm and} \quad
TF=\left( \begin{array}{ccccc}
t^{s_1} & 0 & \cdots &  0 & -t^{s_n} \\
\vdots & \vdots &  & \vdots & \vdots \\
t^{s_1} & 0 & \cdots & 0 & -t^{s_n} \\
0 & 0 & \cdots & 0 & 0 \end{array}\right) .\]
So it is easy to see that $I-t^{-k}(TF)^2=(I-t^{-\frac{k}{2}}TF)(I+t^{-\frac{k}{2}}TF)$ is invertible. 
For completeness we mention that oldforms are spanned by
\[ \langle \c_0+\cdots+\c_{(n-2)(q-1)}, t^{n-2}\c_{q-1}+t^{n-3}\c_{2(q-1)}+\cdots+t \c_{(n-2)(q-1)}+\c_{(n-1)(q-1)} \rangle, \]
while newforms are generated by
\[  \langle \c_{q-1}, \cdots, \c_{(n-2)(q-1)}\rangle\,. \] 
For the injectivity of $\T_t$, direct computation show  
\[  Ker(MA)=< \c_0+\c_{q-1}+\cdots+\c_{(n-2)(q-1)}> \]
and 
\[  Ker(F+MD)= < t^{(n-1)(q-1)}\c_0+\c_{(n-1)(q-1)}, \c_{q-1}, \cdots, \c_{(n-2)(q-1)}>\,,  \]
so their intersection is trivial.\\
Finally, diagonalizability (or non diagonalizability) follows from the central antidiagonal block and the calculation of 
the characteristic polynomial (note that $k\geqslant 3$ in our range here)
\[ \det(M(0,n,q) D-XI)=\left\{ \begin{array}{ll} (X^2-tX)(X^2-t^k)^{\frac{n}{2}-1} & {\rm if}\ n\ {\rm is\ even} \\
\ & \\
(X^2-tX)(X^2-t^k)^{\frac{n-3}{2}}(-X+t^{\frac{k}{2}}) & {\rm if}\ n\ {\rm is\ odd} \end{array} \right. \,.\qedhere\]
\end{proof}

\begin{thm}\label{Thmn=j+2}
If $n=j+2$ with $0\leqslant j\leqslant q-2$ the matrix $M(j,j+2,q)$ has the following shape:
\[ M=\left( \begin{array}{cccccc}
1 & m_{1,2} & \cdots & \cdots & (-1)^{j+1}m_{1,2} & 0 \\
0 & 0 & \cdots & 0 & (-1)^j & \vdots \\
\vdots & \vdots & \ & \udots & 0 & \vdots \\
\vdots & 0 & \udots &  & \vdots & \vdots \\
0 & (-1)^j & 0 & \cdots & 0 & \vdots \\
(-1)^j & 0 & \cdots & \cdots & 0 & 0 \end{array}\right)\,. \]
\end{thm}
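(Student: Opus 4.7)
My plan is to adapt the Lucas-theorem (Lemma \ref{KummerThm}) analysis of Theorem \ref{ThmAntidiagonal}, exploiting the fact that only the row $a=1$ requires genuinely new treatment. For the rows $a\geqslant 2$ one has $n-a\leqslant j$, so the base-$p$ digit structure of $j+(n-a)(q-1)=(n-a)q+(a-2)$ (with $0\leqslant a-2,\,n-a<q$) is identical to that in Theorem \ref{ThmAntidiagonal}: the low $r$ digits encode $a-2$ and the next $r$ digits encode $n-a$. Repeating the same digit-by-digit comparison in \eqref{spam}, both binomials vanish for entries strictly above the antidiagonal, and the diagonal entries $m_{a,a}$ vanish for $2\leqslant a\leqslant n-1$. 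Combining this with the symmetries S2--S4 then forces $m_{a,n+1-a}=(-1)^j$ for $a\geqslant 2$ and produces the shape $((-1)^j,0,\dots,0)$ for the last row.

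The genuinely new row is $a=1$. Here $j+(n-1)(q-1)=(j+1)q-1=jq+(q-1)$, whose base-$p$ expansion has digits $p-1$ at positions $0,\dots,r-1$ and the digits $j_0,\dots,j_{r-1}$ of $j$ at positions $r,\dots,2r-1$ (where $q=p^r$). Writing $j=\sum_{i=0}^{r-1}j_ip^i$, Lucas' theorem gives
\[ m_{1,1}=(-1)^j\binom{(j+1)q-1}{j}\equiv(-1)^j\prod_{i=0}^{r-1}\binom{p-1}{j_i}\equiv(-1)^j(-1)^{s_p(j)}\pmod{p}, \]
and since $s_p(j)\equiv j\pmod{p-1}$ (trivially when $p=2$, and because $p-1$ is even when $p$ is odd) we get $m_{1,1}=1$. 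The top-right corner then follows from S2, namely $m_{1,n}=(-1)^{j+1}(m_{1,1}-1)=0$; alternatively a direct Lucas computation gives $m_{1,n}=-\bigl[(-1)^j+(-1)^{j+1}\bigr]=0$. The remaining entries $m_{1,b}$ for $2\leqslant b\leqslant n-1$ are in general nonzero (precisely because the high digits of $(j+1)q-1$ are now the digits of $j$ rather than all zero), and the matrix statement simply names them $m_{1,b}$ and invokes S1 to record $m_{1,n+1-b}=(-1)^{j+1}m_{1,b}$.

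I expect the main obstacle to be bookkeeping rather than conceptual: carefully rerunning the argument of Theorem \ref{ThmAntidiagonal} under the slightly extended range $n=j+2$ (in particular checking that the boundary case $n-a=j$ in rows $a=2$ does not upset the digit comparisons), and getting the base-$p$ expansion of $(j+1)q-1$ correct in the one identity $m_{1,1}=1$. Once $m_{1,1}=1$ is in hand, the rest of the claimed shape is read off from S1--S4 and the row-$a\geqslant 2$ analysis.
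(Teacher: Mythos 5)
Your proposal is correct and follows essentially the same route as the paper, whose proof of Theorem \ref{Thmn=j+2} consists precisely of re-running the Lucas/Kummer digit comparison of Theorems \ref{ThmAntidiagonal} and \ref{Thmj=0} together with the symmetries S1--S4; your extra computation $m_{1,1}=(-1)^j\prod_i\binom{p-1}{j_i}=1$ is the right way to handle the only genuinely new row. The sole nitpick is the phrase ``the diagonal entries $m_{a,a}$ vanish for $2\leqslant a\leqslant n-1$'': for odd $n$ the central entry $a=\frac{n+1}{2}$ lies on the antidiagonal and equals $(-1)^j$, exactly as in the exception already noted in the proof of Theorem \ref{ThmAntidiagonal}, which is consistent with your subsequent antidiagonal claim.
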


\begin{proof}
Apply again Lemma \ref{KummerThm} as already done in the proofs of Theorems \ref{Thmj=0} and \ref{ThmAntidiagonal}.
\end{proof}

\begin{cor}\label{Corn=j+2}
With hypotheses as in Theorem \ref{Thmn=j+2} we have that all Conjectures \ref{NewConj} hold.
\end{cor}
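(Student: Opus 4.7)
The plan is to exploit the explicit shape of $M$ given by Theorem~\ref{Thmn=j+2}: its last column vanishes while the remaining columns are $e_1+(-1)^je_n$ (column~$1$) and $m_{1,b}e_1+(-1)^je_{n+1-b}$ for $2\leqslant b\leqslant n-1$; these are linearly independent, so $Ker(M)=\langle e_n\rangle$. Since $A$ is invertible and $D$ diagonal invertible, it follows that $Ker(MA)=\langle e_1\rangle$ and $Ker(MD)=\langle e_n\rangle$, both one-dimensional, as expected from $\dim S^1_{k,m}(GL_2(A))=1$ (generated by $h^{j+1}$ if $j<q-2$ and by $h^{q-1}$ if $j=q-2$). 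Conjecture~{\bf 1} is then immediate: using $(-1)^{s_1}=(-1)^{j+1}$ one finds $(F+MD)e_1=(-t)^{s_1}e_n+t^{s_1}(e_1+(-1)^je_n)=t^{s_1}e_1\neq 0$, so $Ker(MA)\cap Ker(F+MD)=0$.

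For Conjecture~{\bf 3} I apply the criterion of Theorem~\ref{ThmSum}. From $T=I+MA$ one reads off $Te_1=e_1$, $Te_b=m_{1,b}e_1$ for $2\leqslant b\leqslant n-1$, and $Te_n=(-1)^{j+1}e_1$, so $T$ (and hence $TF$) has rank one with image $\langle e_1\rangle$, and a direct check gives $TFe_1=t^{s_1}e_1$. Therefore $(TF)^2=t^{s_1}TF$ and $I-t^{-k}(TF)^2=I-t^{s_1-k}TF$ differs from the identity only in its first row. Expanding along the first column,
\[
\det\bigl(I-t^{-k}(TF)^2\bigr)=1-t^{2s_1-k}=1-t^{-(j+1)(q-1)}\neq 0
\]
because $t$ is transcendental over $\F_q$. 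Theorem~\ref{ThmSum} now yields the decomposition of Conjecture~{\bf 3}; in particular $\dim S^{1,new}_{k,m}(\G_0(t))=n-2=j$.

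For Conjecture~{\bf 2}, I write $M=(-1)^jP+e_1v^T$, where $Pe_b=e_{n+1-b}$ and $v=e_1+\sum_{b=2}^{n-1}m_{1,b}e_b+(-1)^{j+1}e_n$. Since $(PD)^2=t^kI$, the matrix-determinant lemma applied to $MD=(-1)^jPD+e_1(Dv)^T$ produces
\[
\chi_{MD}(\lambda)=\lambda(\lambda-t^{s_1})(\lambda^2-t^k)^{(n-2)/2}
\]
for $n$ even, and $\lambda(\lambda-t^{s_1})(\lambda^2-t^k)^{(n-3)/2}(\lambda-(-1)^jt^{k/2})$ for $n$ odd. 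When $q$ is odd the four roots $0,t^{s_1},\pm t^{k/2}$ are pairwise distinct, and explicit eigenvectors --- $e_n$ for $0$, $e_1+(-1)^je_n$ for $t^{s_1}$, and for each pair $\{e_b,e_{n+1-b}\}$ with $2\leqslant b\leqslant\lfloor n/2\rfloor$ one eigenvector of the form $\alpha e_1+e_b+(-1)^jt^{s_b\mp k/2}e_{n+1-b}+\beta e_n$ for each of $\pm t^{k/2}$, plus a middle eigenvector when $n$ is odd --- exhaust all $n$ dimensions, so $MD$ is diagonalizable. When $q$ is even the eigenvalues $\pm t^{k/2}$ coincide; the identity $\U_t^2=t^kI$ on $S^{1,new}_{k,m}(\G_0(t))$ forces the minimal polynomial of $MD|_{S^{1,new}}$ to divide $(\lambda-t^{k/2})^2$, so diagonalizability is equivalent to $MD|_{S^{1,new}}=t^{k/2}I$, which holds automatically when $\dim S^{1,new}\leqslant 1$ (i.e.\ $n\leqslant 3$).

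The main obstacle is completing the $q$ even case by establishing non-diagonalizability for $n\geqslant 4$. The strategy is to compute $Ker(MD-t^{k/2}I)$ directly and show that each pair $\{e_b,e_{n+1-b}\}$ contributes exactly one eigenvector (the first-row perturbations $m_{1,b}$ being controlled by the invertibility of $t^{s_1}-t^{k/2}$), together with one middle eigenvector when $n$ is odd. This yields a geometric multiplicity of $\lfloor(n-1)/2\rfloor$, strictly smaller than the algebraic multiplicity $n-2$ as soon as $n\geqslant 4$, producing the required Jordan block.
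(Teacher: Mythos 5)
Your proposal is correct and follows essentially the same route as the paper: the explicit shape of $M(j,j+2,q)$ makes $T$ and $TF$ rank one with image $\langle e_1\rangle$, so $I-t^{-k}(TF)^2$ is invertible and Theorem \ref{ThmSum} gives Conjecture {\bf 3}, the kernels $\langle e_1\rangle$ and $\langle e_n\rangle$ identify the oldforms and settle Conjecture {\bf 1}, and the characteristic polynomial of $MD$ handles Conjecture {\bf 2}. The one step you leave as a ``strategy'' --- non-diagonalizability for $q$ even and $n\geqslant 4$ --- does go through exactly as you predict (the $t^{k/2}$-eigenspace has dimension $\lfloor (n-1)/2\rfloor < n-2$), and the paper itself is no more explicit there (``diagonalizability is straightforward''); the only cosmetic slip is that the $e_{n+1-b}$-coefficients of the $\pm t^{k/2}$-eigenvectors should be $\pm(-1)^j t^{\,s_b-k/2}$ rather than $(-1)^j t^{\,s_b\mp k/2}$.
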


\begin{proof}
Using
\[ T= \left( \begin{array}{ccccc}
1 & m_{1,2} & \cdots & (-1)^{j+1}m_{1,2} & (-1)^{j+1} \\
0 & 0 & \cdots & 0 & 0\\
\vdots & \vdots & & \vdots & \vdots \\
0 & \cdots & \cdots & \cdots & 0 \end{array}\right) \quad {\rm and}\quad
TF=\left(\begin{array}{ccccc}
t^{s_1} & m_{1,2}t^{s_2} & \cdots & m_{1,2}t^{s_{n-1}} & t^{s_n}\\
0 & 0 & \cdots & 0 & 0 \\
\vdots & \vdots & \ddots & \vdots & \vdots\\
0 & 0 & \cdots & 0 & 0
\end{array}\right) \]
computations are as in the previous corollary (and even easier). Oldforms are generated by 
\[  Ker(MA)=\langle \c_j\rangle \quad {\rm and} \quad  Ker(MD)=\langle \c_{j+(n-1)(q-1)}\rangle \]
and newforms are spanned by $\langle \c_{j+(q-1)}, \cdots, \c_{j+(n-2)(q-1)} \rangle$, no matter the values of the $m_{1,b}$.
The characteristic polynomial is
\[ \det (M(j,j+2,q)D-XI)= (X^2-t^{j+1}X)\cdot \left\{ \begin{array}{ll}
(X^2-t^k)^{\frac{n-2}{2}} & {\rm if\ }n\ {\rm is\ even} \\
\ & \\
(X^2-t^k)^{\frac{n-3}{2}}(-X+(-1)^j t^{\frac{k}{2}}) & {\rm if\ }n\ {\rm is\ odd} \end{array}\right. ,\]
and diagonalizability is straightforward (even without an antidiagonal block).
\end{proof}

In low dimension (i.e. for small $k$) the previous theorems prove the conjectures for $n\leqslant 3$ and we only need one more matrix
to check them for $n\leqslant 4$ (i.e. for $k\leqslant 2(q-2)+2+3(q-1)=5q-5$). We provide this final example for completeness; since we 
only have to consider the cases $n\geqslant j+3$, it should be easy  to go on with explicit computations for small $n$.  
 
\begin{thm}
If $n\leqslant 4$ all Conjectures \ref{NewConj} are true.
\end{thm}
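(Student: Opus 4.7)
The plan is to reduce the statement to a single open case and verify it by explicit computation. Combining Theorem \ref{ThmAntidiagonal} with Theorem \ref{AntidiagAreNew} handles all pairs $(j,n)$ with $n\leqslant j+1$; Corollary \ref{Corj=0} handles all pairs with $j=0$; and Corollary \ref{Corn=j+2} handles all pairs with $n=j+2$. A direct inspection shows that, restricted to $n\leqslant 4$, these three results together cover every pair except $(j,n)=(1,4)$, i.e., the block of dimension $4$ in weight $k=3q+1$ and type $m\equiv 2\pmod{q-1}$. When $q$ is odd the companion class $C_{1+(q-1)/2}$ has dimension $3$ and satisfies $n\leqslant j+1$, so it is antidiagonal and already dealt with. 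Thus only the block $(1,4)$ remains.

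For this block the strategy is to mimic the proofs of Theorems \ref{Thmj=0} and \ref{Thmn=j+2}: apply Lucas' Theorem (Lemma \ref{KummerThm}) to the entries of \eqref{spam}, using the base-$p$ expansions of $q-1$, $2q-1$ and $3q-2$. A routine case analysis yields the explicit matrix
\[ M(1,4,q)=\left(\begin{array}{cccc} 2 & -2 & -2 & 1 \\ 1 & -1 & -2 & 1 \\ 0 & -1 & 0 & 0 \\ -1 & 0 & 0 & 0 \end{array}\right) \]
with entries in $\F_p\subset\C_\infty$ (collapsing appropriately when $p=2$). From this description one computes $T=I+MA$ and finds that its two bottom rows vanish and its two top rows are proportional; consequently both $T$ and $TF$ have rank $1$. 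Writing $TF=uv^{\top}$ one then computes $\det(I-t^{-k}(TF)^2)=1-t^{-k}(v^{\top}u)^2$, which is an explicit expression of the form $1+$ (strictly negative powers of $t$) and hence is manifestly nonzero. Theorem \ref{ThmSum} then gives Conjecture {\bf 3}.

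Conjectures {\bf 1} and {\bf 2} follow by the same sort of bookkeeping. For {\bf 1} one exhibits bases of the one-dimensional spaces $Ker(MA)=Im(\delta_1)$ and $Ker(F+MD)$ and observes that they have disjoint support, so their intersection is trivial. For {\bf 2} one computes the characteristic polynomial of $MD$: after identifying the oldform eigenvalue $0$ on $Ker(MD)=Im(\delta_t)$ and the $\T_t$-eigenvalue on $Ker(MA)$, the remaining $2\times 2$ newform block has the antidiagonal shape predicted in Section \ref{SecAntidiagonalNewforms}, with eigenvalues $\pm t^{k/2}$; this is diagonalizable for $q$ odd but not for $q$ even.

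The main obstacle, and the reason the argument has to be split, is the behaviour in characteristic $2$: many entries of $M(1,4,q)$ collapse (for instance $m_{1,1}=2\equiv 0$), so $T$, $TF$, $F+MD$ and the newform subspace all have to be recomputed from scratch. In that setting one verifies that the newform subspace becomes $2$-dimensional and the $\U_t$-action on it is a genuine non-diagonalizable antidiagonal $2\times 2$ block with repeated eigenvalue $t^{k/2}$, matching precisely the even-characteristic clause of Conjecture {\bf 2}. Once the matrix is in hand everything else is finite linear algebra.
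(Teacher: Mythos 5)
Your overall route is the same as the paper's: the reduction leaves exactly the block $(j,n)=(1,4)$ with $k=3q+1$, the matrix $M(1,4,q)$ you obtain from Lemma \ref{KummerThm} is the one the paper uses, Conjecture {\bf 3} is deduced from Theorem \ref{ThmSum} by checking that $I-t^{-k}(TF)^2$ is invertible, and Conjecture {\bf 2} comes from the characteristic polynomial $X(X+t^{q+1}-2t^2)(X^2-t^{3q+1})$, whose factor $X^2-t^{k}$ has distinct roots $\pm t^{k/2}$ for $q$ odd and an inseparable double root for $q$ even, matching the two clauses of the conjecture. Your rank-one computation, writing $TF=uv^{\top}$ with $u=(2,1,0,0)^{\top}$, $v^{\top}=(t^{2},-t^{q+1},-t^{2q},t^{3q-1})$ and $\det\bigl(I-t^{-k}(TF)^2\bigr)=1-t^{-k}(2t^{2}-t^{q+1})^{2}$, is a nice explicit justification of an invertibility that the paper only asserts.

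There is, however, a concrete misstep in your verification of Conjecture {\bf 1}. Since $AF=D$, one has $F+MD=TF=uv^{\top}$, so $F+MD$ has rank $1$ and $Ker(F+MD)$ is \emph{three}-dimensional, not one-dimensional; moreover its elements need not have support disjoint from $Ker(MA)$ (for instance $(t^{q-1},1,0,0)^{\top}$ lies in $Ker(F+MD)$), so the ``disjoint support'' argument fails as stated. You may have had $Ker(MD)=\langle (0,0,t^{q-1},2)^{\top}\rangle$ in mind, which is indeed one-dimensional and disjoint in support from $Ker(MA)=\langle (2,1,0,0)^{\top}\rangle$, but that only re-proves the injectivity of $\delta$, not Conjecture {\bf 1}. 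The needed statement is nonetheless true and follows in one line from your own rank-one description: the generator $(2,1,0,0)^{\top}$ of $Ker(MA)$ satisfies $v^{\top}(2,1,0,0)^{\top}=2t^{2}-t^{q+1}\neq 0$ in every characteristic (the coefficient of $t^{q+1}$ is $-1$), hence $Ker(MA)\cap Ker(F+MD)=0$. With that repair your argument is complete and coincides with the paper's proof.
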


\begin{proof}
As mentioned above we only need to check $n=4$ for $j=1$: we have that $k=3q+1$  
\[   
M=\left(\begin{array}{cccc}
2 & -2 & -2 & 1\\
1 & -1 & -2 & 1\\
0 & -1 & 0 & 0\\
-1 & 0 & 0 & 0
\end{array}\right)\, , \quad {\rm and} \quad
TF=\left(\begin{array}{cccc}
2t^2 & -2t^{q+1} & -2t^{2q} & 2t^{3q-1}\\
t^2 & -t^{q+1} & -t^{2q} & t^{3q-1}\\
0 & 0 & 0 & 0\\
0 & 0 & 0 & 0
\end{array}\right). \]
The matrix $I-t^{-k}(TF)^2=(I-t^{-\frac{k}{2}}TF)(I+t^{-\frac{k}{2}}TF)$ is invertible and the characteristic polynomial for
$MD$ is $P(X)=X(X+t^{q+1}-2t^2)(X^2-t^{3q+1})$ so all Conjectures hold. In particular newforms are generated by
\[ \langle (t^{2q}-t^{q+1})\c_1+(t^{2q}-t^2)\c_{q}+(t^2-t^{q+1})\c_{2q-1},(t^{q+1}-t^{3q-1})\c_1
+(t^2-t^{3q-1})\c_{q}+(t^2-t^{q+1})\c_{3q-2}\rangle \]
and oldforms by
\[ \langle 2\c_1+\c_q,t^{q-1}\c_{2q-1}+2\c_{3q-2}\rangle .\qedhere \]
\end{proof}

\section{Bounds on slopes}\label{SecBounds}
Since we know that all newforms have slope $k/2$ and we believe that 
$S^1_{k,m}(\G_0(t))$ is the direct sum of oldforms and newforms, we only need to bound
slopes of oldforms. We do this by looking at the Newton polygon of the characteristic
polynomial for $\U_t$, obtaining a sharp lower bound for the slopes and upper bounds
for both slopes and their multiplicities (i.e. the dimension of the space generated
by all eigenforms of a given slope). We recall that our data indicate that $k/2$ is
a sharp upper bound (and Remark \ref{RemTF} strengthens this belief), unfortunately
the actual bound of Theorem \ref{ThmUpperBSlopes} is still quite far from it (while the
bound for multiplicities obtained in Theorem \ref{ThmBoundDim} is optimal and analogue
to the one of \cite{Bu} for the characteristic zero case).

Thanks to \cite[Theorem 3.9]{BV2} we have that \[ r := \dim_{\C_\infty} S^1_{k,m}(GL_2(A))=  \dim_{\C_\infty} Ker(\U_t)\,.\]
Therefore, the characteristic polynomial of $\U_t$ on $C_j$ looks like 
\[  P_{\U_t}(X)=X^n +\l_1X^{n-1}+ \cdots + \l_{n-r}X^{n-(n-r)} \]
 with $\ell_{n-r}\neq 0$. Looking at the form of our matrix $MD$ (in particular the fact that the $i$-th column is divisible exactly by
$t^{s_i}$), we have that 
\[ \l_i= \sum_{\begin{subarray}{c} 1\leqslant \iota_1,\dots,\iota_i\leqslant n \\
\iota_1<\iota_2<\cdots <\iota_i \end{subarray}}  \ell_{\iota_1,\cdots,\iota_i} t^{s_{\iota_1}+\cdots +s_{\iota_i}} \] 
for suitable $\ell_{\iota_1,\cdots,\iota_i}\in \F_q$ and $\ell_0=1$. Let $Q_i=(i,v_t(\l_i))$ for $i=0,\cdots, n-r$ be the points of the 
Newton polygon associated to $p(x)$. Of course, if $\l_i=0$ we skip the corresponding $Q_i$.
We are looking for bounds on $v_t(\l_i)$, hence on slopes and their multiplicity by \cite[Ch IV, Lemma 4]{Ko} 
\footnote{There is quite a difference between our notations and the one in \cite[Ch IV, Lemma 4]{Ko}, but we could not find a more suitable 
reference and, in our opinion, our computations are clearer with our notations.} which here reads as

\begin{lem}\label{LemmaKo}
Let $\alpha\in \mathbb{Q}$. We say that $\alpha$ is a slope of multiplicity $d(\alpha,k)$ for $\U_t$ if there are exactly
$d(\alpha,k)$ roots of $P_{\U_t}(X)$ having $t$-adic valuation $\alpha$. If the Newton polygon of the polynomial $P_{\U_t}(X)$ has a segment
of slope $\alpha$ and projected length $d(\alpha,k)$ (i.e. the length of the projection of the segment on the $x$ axis), 
then $\alpha$ is a slope of multiplicity $d(\alpha,k)$ for $\U_t$. 
\end{lem}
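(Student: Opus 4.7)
The plan is to invoke the classical Newton polygon theorem for polynomials over the complete non-archimedean field $K_\infty$, working after extending $v_t$ uniquely to an algebraic closure $\overline{K_\infty}$. The strategy is to factor $P_{\U_t}(X)$ over $\overline{K_\infty}$ and compare the slopes of the Newton polygon directly to the $t$-adic valuations of its roots.

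First, I would write
\[ P_{\U_t}(X) \;=\; X^r \prod_{j=1}^{n-r}(X-\rho_j), \]
with the nonzero roots (counted with multiplicity) labelled so that $v_t(\rho_1) \leqslant v_t(\rho_2) \leqslant \cdots \leqslant v_t(\rho_{n-r})$. Vieta's formulas identify $\lambda_i$ (up to sign) with the $i$-th elementary symmetric polynomial in the $\rho_j$'s, and the ultrametric inequality at once yields
\[ v_t(\lambda_i) \;\geqslant\; \sigma_i \;:=\; v_t(\rho_1)+\cdots+v_t(\rho_i). \]

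Second, I would show that equality $v_t(\lambda_i)=\sigma_i$ holds whenever $v_t(\rho_i)<v_t(\rho_{i+1})$ (or $i=n-r$). At such indices the monomial $\rho_1\cdots\rho_i$ is the unique term of minimal valuation inside the $i$-th elementary symmetric polynomial, so no cancellation can take place. This pins down the vertices of the Newton polygon: they are exactly the points $(i,\sigma_i)$ at the "breakpoints" of the valuation sequence, and the segment joining two consecutive vertices has slope equal to the common value of $v_t(\rho_j)$ at the intermediate indices.

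Finally, for any $\alpha \in \Q$, the projected length of the slope-$\alpha$ segment of the Newton polygon equals $\#\{j : v_t(\rho_j)=\alpha\}$, which is by definition the multiplicity $d(\alpha,k)$ of $\alpha$ as a $t$-slope of $\U_t$. The only technical subtlety is the non-cancellation argument in the second step, but it is a standard consequence of the ultrametric inequality and is precisely the content of the result cited as \cite[Ch IV, Lemma 4]{Ko}, from which the statement follows directly.
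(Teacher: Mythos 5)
Your argument is correct and amounts to the same approach as the paper, which gives no proof of its own but simply restates the classical Newton polygon lemma from \cite[Ch.~IV, Lemma~4]{Ko}; your Vieta-plus-ultrametric sketch (lower bound $v_t(\l_i)\geqslant\sigma_i$ with equality and no cancellation at the breakpoints of the root-valuation sequence) is precisely the standard proof of that cited statement. One small slip to fix: the relevant complete field is the $t$-adic completion $\F_q((t))$ of $K$, with $v_t$ extended to its algebraic closure, not $K_\infty$, which is complete for the place $\infty=1/t$ and on which $v_t$ does not extend.
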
 

In order to do this, first observe that since the $s_i=j+1+(i-1)(q-1)$ are all distinct, the sums 
$s_{\iota_1}+\cdots +s_{\iota_i}$ are all distinct too. Moreover, the $s_i$ are increasing. Then we have:
\begin{align}\label{LowerBound}
v_t(\l_i) & \geqslant \min\{ s_{\iota_1}+\cdots+s_{\iota_i} \} = s_1+\cdots+s_i \\ \nonumber
 & = \sum_{h=0}^{i-1} (j+1+h(q-1)) = i(j+1) +\frac{i(i-1)}{2}(q-1)
\end{align}
and
\begin{align}\label{UpperBound}
v_t(\l_i) & \leqslant \max\{ s_{\iota_1}+\cdots+s_{\iota_i} \} = s_n +s_{n-1}+\cdots + s_{n-i+1} \\ \nonumber
 & = \sum_{h=n-i}^{n-1} (j+1 +h(q-1)) = i(j+1) +\left(in - \frac{i(i+1)}{2}\right)(q-1).
\end{align}
Using the above bounds we can plot the points 
\begin{align*}
P_i & = \left( i, i(j+1) +\frac{i(i-1)}{2}(q-1) \right)\\
R_i & = \left( i, i(j+1) +\left(in - \frac{i(i+1)}{2}\right)(q-1) \right)
\end{align*}
and the Newton Polygon of $P_{\U_t}(X)$ lies on or above the $P_i$'s. Looking at the segment joining $P_0=(0,0)$ and $P_1=(1,j+1)$
we immediately have

\begin{prop}\label{SmallestSlope}
The smallest possible slope for $\U_t$ is $j+1$, moreover its multiplicity is 
\[  d(j+1,k) \leqslant 1\,. \]
\end{prop}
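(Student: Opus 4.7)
The plan is to exploit the lower bound \eqref{LowerBound} on the $t$-adic valuations $v_t(\lambda_i)$ in order to control the Newton polygon of $P_{\U_t}(X)$, and then to read off the smallest slope and its multiplicity via Lemma \ref{LemmaKo}.

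First, I would recall that the points $Q_i = (i, v_t(\lambda_i))$ (for those $i$ with $\lambda_i \neq 0$) all lie on or above the points $P_i = (i, i(j+1) + \tfrac{i(i-1)}{2}(q-1))$. A quick computation shows that the slope of the segment $P_i P_{i+1}$ equals $(j+1) + i(q-1)$, which is a strictly increasing function of $i$. Hence the sequence $\{P_i\}$ is itself convex, and the smallest slope occurring among segments of the lower convex hull of the $\{Q_i\}$ is at least the slope of $P_0 P_1$, which is $j+1$.

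For the multiplicity statement I would argue as follows. Any segment of the Newton polygon of slope $j+1$ must start at $Q_0 = (0,0)$ (since the polygon is convex and all other segments have strictly larger slope in the $P_i$-bound). If such a segment reached a vertex $Q_i$ with $i \geqslant 2$, one would need $v_t(\lambda_i) = i(j+1)$, but \eqref{LowerBound} gives $v_t(\lambda_i) \geqslant i(j+1) + \tfrac{i(i-1)}{2}(q-1) > i(j+1)$ for $i \geqslant 2$ (using $q \geqslant 2$). Therefore the only possible endpoint is $Q_1$, and the projected length of the segment of slope $j+1$ is at most $1$. By Lemma \ref{LemmaKo}, $d(j+1,k) \leqslant 1$.

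There is no serious obstacle here: the whole argument is a direct reading of the Newton polygon, using only the convexity of the lower bounds $P_i$ and the strict inequality for $i \geqslant 2$. The only point worth being careful about is that $\lambda_1$ itself may vanish, in which case $j+1$ is not actually attained as a slope, but this is compatible with the inequality $d(j+1,k) \leqslant 1$ and with the word ``possible'' in the statement.
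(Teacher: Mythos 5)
Your proposal is correct and follows essentially the same route as the paper: both rest on the Newton polygon bound $v_t(\lambda_i)\geqslant i(j+1)+\tfrac{i(i-1)}{2}(q-1)$ from \eqref{LowerBound} and the segment joining $P_0=(0,0)$ to $P_1=(1,j+1)$, together with Lemma \ref{LemmaKo}. You merely spell out the convexity of the points $P_i$ and the strict inequality for $i\geqslant 2$ that the paper leaves implicit, which is a fair elaboration rather than a different argument.
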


\begin{rem}
The above result was already known for cusp forms with $A$-expansion: see \cite[Theorem 2.6]{Pe}. Moreover
if one considers the action of $\U_t$ on the whole $S^1_k(\Gamma_1(t))$ one finds $d(j+1,k) = 1$ as mentioned in
\cite[Lemma 2.4]{Ha} (which uses a different normalization so that our $j+1$ becomes 0). This also shows that
our eigenforms of slope 1 should play the role of classical ``ordinary'' forms (or of a renormalization of them): 
for a completely different and more geometric approach see also \cite{NR}.
\end{rem}

Using the bounds in formulas \eqref{LowerBound} and \eqref{UpperBound} we can prove also the following.

\begin{thm} \label{ThmUpperBSlopes}
If $\a$ is a slope for $\U_t$ of multiplicity $d(\a,k):=d\geqslant 1$, then 
\[  \a\leqslant j+1 +[(n-r)n-1](q-1)\,. \]
\end{thm}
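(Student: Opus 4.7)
My plan is to analyze the Newton polygon of $P_{\U_t}(X)=X^n+\l_1X^{n-1}+\cdots+\l_{n-r}X^r$, combining an upper bound on $v_t(\l_{n-r})$ with the lower bound on all slopes supplied by Proposition \ref{SmallestSlope}. The key identity is that the sum of all slopes (counted with multiplicity) equals $v_t(\l_{n-r})$, since $\l_{n-r}$ is (up to sign) the product of the non-zero roots of $P_{\U_t}(X)$; isolating the contribution of a given slope $\a$ thus reduces the task to an explicit upper estimate on $v_t(\l_{n-r})$.

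First I would use the explicit expansion derived just above the statement, $\l_i=\sum_{\iota_1<\cdots<\iota_i}\ell_{\iota_1,\dots,\iota_i}\,t^{s_{\iota_1}+\cdots+s_{\iota_i}}$ with $\ell_{\iota_1,\dots,\iota_i}\in\F_q$. Since $\l_{n-r}\neq 0$, its $t$-valuation is bounded above by its $t$-degree, which is at most the maximum exponent appearing in the above sum. Using the clean (but loose) majorization $s_{\iota_1}+\cdots+s_{\iota_{n-r}}\leqslant (n-r)s_n$ one gets
\[ v_t(\l_{n-r})\leqslant (n-r)s_n=(n-r)(j+1)+(n-r)(n-1)(q-1). \]

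Second, Proposition \ref{SmallestSlope} forces every slope of $\U_t$ to be at least $j+1$. Since the slopes sum to $v_t(\l_{n-r})$, isolating a slope $\a$ of multiplicity $d$ yields
\[ \a\,d+(j+1)(n-r-d)\leqslant v_t(\l_{n-r}). \]
Substituting the bound from the previous step produces $\a\,d\leqslant d(j+1)+(n-r)(n-1)(q-1)$; dividing by $d\geqslant 1$ and using $(n-r)(n-1)=(n-r)n-(n-r)\leqslant (n-r)n-1$ (valid because $\l_{n-r}\neq 0$ forces $n-r\geqslant 1$) gives the claim.

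The main obstacle is the looseness of the upper bound for $v_t(\l_{n-r})$: the argument discards all information about the $\F_q$-minors of $M$ and relies only on a worst-case degree estimate. Replacing $(n-r)s_n$ with the sharp maximum $s_{r+1}+\cdots+s_n=(n-r)(j+1)+\tfrac{(n+r-1)(n-r)}{2}(q-1)$, or coupling it with the sharp lower bound $v_t(\l_{n-r-d})\geqslant s_1+\cdots+s_{n-r-d}$ applied to the actual segment of slope $\a$, improves the $(q-1)$-coefficient by a factor of roughly two but remains quadratic in $n$. As Remark \ref{RemTF} indicates, the conjectural truth $\a\leqslant k/2=(j+1)+(n-1)(q-1)/2$ is linear in $n$, so closing the gap must exploit the actual combinatorics of the minors of $M$ and not just the column valuations of $MD$.
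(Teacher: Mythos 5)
Your argument is correct and in fact yields the slightly sharper intermediate bound $\a\leqslant j+1+\frac{(n-r)(n-1)}{d}(q-1)$, but it follows a genuinely different route from the paper. The paper works locally on the Newton polygon: it locates the segment of slope $\a$ between two vertices $Q_i$ and $Q_{i+d}$, writes $\a d=v_t(\l_{i+d})-v_t(\l_i)$, and bounds this difference using the maximum \eqref{UpperBound} at $i+d$ and the minimum \eqref{LowerBound} at $i$, obtaining $\a\leqslant j+1+\left[\frac{(i+d)(n-i)}{d}-\frac{d+1}{2}\right](q-1)$ before relaxing to the stated bound. You instead argue globally: the sum of all finite slopes equals $v_t(\l_{n-r})$ because $\l_{n-r}$ is, up to sign, the product of the nonzero roots; every slope other than $\a$ is at least $j+1$ by Proposition \ref{SmallestSlope}; and $v_t(\l_{n-r})$ is majorized by the crude degree estimate $(n-r)s_n$. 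Your version is more economical (no need to locate the segment; only the lowest nonzero coefficient and the minimal slope enter), whereas the paper's segment-local estimate keeps track of the position $i$ and of the convexity-sharpened lower bound $\frac{i(i-1)}{2}(q-1)$ on $v_t(\l_i)$, which can be sharper for particular $i,d$; after the final relaxation both give exactly the same statement, and both stay quadratic in $n$, far from the conjectured $k/2$. One small wording correction: $n-r\geqslant 1$ is forced by the existence of a slope of multiplicity $d\geqslant 1$ (there is at least one nonzero root), not by ``$\l_{n-r}\neq 0$'' alone, which is vacuous when $n=r$; with that reading your final step $(n-r)(n-1)\leqslant (n-r)n-1$ is fine.
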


\begin{proof}
Let $\a$ be a slope with multiplicity $d\geqslant 1$. Then, there exists an $i$ such that  the segment connecting $Q_i$ and $Q_{i+d}$ 
has slope $\a$. Note that in particular: $i\geqslant 0$, $i+d\leqslant n-r$ and $1\leqslant d\leqslant n-r$.
By hypothesis $\a d = v_t(\l_{i+d})-v_t(\l_i) $, thus
\[  \min\{ v_t(\l_{i+d})\} -\max\{v_t(\l_i) \} \leqslant \a d \leqslant \max\{ v_t(\l_{i+d})\} - \min\{ v_t(\l_{i})\} \,.\]
By the right inequality we find:
\begin{align*}
\a d &  \leqslant (i+d)(j+1) +\left[  (i+d)n -\frac{(i+d)(i+d+1)}{2} \right](q-1)  -i(j+1) -\frac{i(i-1)}{2}(q-1) \\
 & = d(j+1) +\left[ (i+d)n-\frac{2i^2+2id+d^2+d}{2} \right](q-1) \\
 & = d(j+1) +\left[ (i+d)(n-i)-\frac{d(d+1)}{2} \right](q-1)\,.
\end{align*}
Dividing by $d$ and using the above bounds, we get
\begin{align*}
\a & \leqslant j+1+\left[ \frac{(i+d)(n-i)}{d} - \frac{d+1}{2} \right](q-1)\\
 & \leqslant j+1 +[(n-r)n-1](q-1)\,.\qedhere
\end{align*}
\end{proof}

After estimating the slope we can estimate the multiplicity as well. 

\begin{thm}\label{ThmBoundDim}
Let $\alpha\in \mathbb{Q}$, then
\[  d(\a,k)\leqslant 2 \left( \frac{\a-j-1}{q-1}\right)+1\,. \]
\end{thm}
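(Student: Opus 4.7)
The plan is to read off the bound directly from the Newton polygon, using only the lower estimate \eqref{LowerBound} on $v_t(\l_i)$ already established in the preceding discussion; no upper estimate like \eqref{UpperBound} will be needed. Concretely, let $\a_1\leqslant\a_2\leqslant\cdots\leqslant\a_{n-r}$ denote the slopes of the Newton polygon of $P_{\U_t}(X)$, i.e.\ the $t$-adic valuations of the nonzero eigenvalues of $\U_t$ in non-decreasing order. If $\a$ is a slope of multiplicity $d=d(\a,k)$, Lemma \ref{LemmaKo} yields some $i_0\geqslant 0$ with $i_0+d\leqslant n-r$ such that
\[ \a_{i_0+1}=\a_{i_0+2}=\cdots=\a_{i_0+d}=\a, \]
and so \emph{all} of $\a_1,\ldots,\a_{i_0+d}$ are $\leqslant\a$.

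The first step is to exploit the fact that $(i_0+d,v_t(\l_{i_0+d}))$ is an endpoint of this slope-$\a$ segment, hence a genuine vertex of the Newton polygon. This forces the identification
\[ v_t(\l_{i_0+d})=\sum_{i=1}^{i_0+d}\a_i, \]
and combined with $\a_i\leqslant\a$ for $1\leqslant i\leqslant i_0+d$ it gives the upper bound
\[ v_t(\l_{i_0+d})\leqslant (i_0+d)\a. \]

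The second step is to confront this with the lower bound \eqref{LowerBound} applied at index $i_0+d$:
\[ v_t(\l_{i_0+d})\geqslant (i_0+d)(j+1)+\frac{(i_0+d)(i_0+d-1)}{2}(q-1). \]
Dividing through by $i_0+d>0$ and using $i_0\geqslant 0$ then yields
\[ \a\;\geqslant\;(j+1)+\frac{i_0+d-1}{2}(q-1)\;\geqslant\;(j+1)+\frac{d-1}{2}(q-1), \]
which is precisely the desired inequality $d\leqslant 2\left(\frac{\a-j-1}{q-1}\right)+1$.

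There is no real obstacle here: the Newton polygon/eigenvalue dictionary is standard, the lower bound \eqref{LowerBound} has already been proved, and the argument requires nothing else. One can also note that the extra ``slack'' $i_0\geqslant 0$ in the chain of inequalities explains why the bound is sharp exactly when the slope $\a$ occurs at the very beginning of the slope sequence, matching the case of Proposition \ref{SmallestSlope} (where $i_0=0$, $d=1$ and equality is attained).
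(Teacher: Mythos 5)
Your proof is correct and follows essentially the same route as the paper: both bound the multiplicity by comparing the Newton polygon at the right endpoint of the slope-$\a$ segment (which lies on or below the line of slope $\a$ through the origin, since the slopes are non-decreasing) with the lower-bound points $P_i$ coming from \eqref{LowerBound}, and then solve the resulting inequality for the projected length. Your write-up is in fact a slightly more explicit version of the paper's argument (and avoids a small typo in the paper's displayed inequality), but there is no genuine difference in method.
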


\begin{proof}
First we observe that the convexity of the Newton Polygon ensures that the slopes are increasingly ordered. 
So, to obtain the maximal value for $d(\a,k)$ we draw the line from $Q_0=(0,0)$ of slope $\a$ and find 
its intersection with the plot of the points $P_i$, i.e. we find the maximal index $i$ such that the line is still above the point $P_i$.
That $i$ represents an upper bound for $d(\a,k)$ by Lemma \ref{LemmaKo}. 
We have
\[ \a i\geqslant i(j+1)+\frac{i(i-1)}{2}(q-1)\,.\]
Then
\[ \frac{i+1}{2}\leqslant \frac{\a-j-1}{q-1}\
\Longrightarrow \ i\leqslant 2\left(\frac{\a-j-1}{q-1}\right)+1\]
and the claim follows.
\end{proof}

\begin{rem}
For $\a\leqslant j+1$ we obtain the result already indicated by Proposition \ref{SmallestSlope}.
\end{rem}

\subsection{Further conjectures}
Looking at our data, in \cite[Section 5]{BV2} we conjectured

\begin{conj}\label{GMConj}
If $k_1,k_2\in \Z$ are both $>2\alpha$ and $k_1\equiv k_2\pmod{(q-1)q^{n-1}}$ for some $n\geqslant \alpha$, then 
$d(k_1,\alpha)=d(k_2,\alpha)$. 
\end{conj}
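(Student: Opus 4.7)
The plan is to attack this via the explicit matrix description of $\U_t$ developed in Sections \ref{SecSpBlocks} and \ref{SecMatrices}, combined with the Newton polygon arguments of Section \ref{SecBounds}. The strategy parallels the classical Gouv\^ea--Mazur/Wan/Coleman program, which compares characteristic polynomials of the $U_p$-operator across weights in a fixed $p$-adic disc.

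First I would translate the congruence on weights into a congruence on matrix data. Writing $k_i=2j+2+(N_i-1)(q-1)$ for the two weights (within a fixed class $C_j$), the hypothesis $k_1\equiv k_2\pmod{(q-1)q^{n-1}}$ is equivalent to $N_1\equiv N_2\pmod{q^{n-1}}$. I would then compare the matrices $M(j,N_1,q)$ and $M(j,N_2,q)$ via Lucas' theorem (Lemma \ref{KummerThm}). The entries in \eqref{spam} involve binomials of the form $\binom{j+(N-a)(q-1)}{\cdots}$, and when $N$ varies by a multiple of $q^{n-1}=p^{r(n-1)}$ the base-$p$ expansions of these quantities differ only in positions above digit $r(n-1)$. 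Consequently the top-left ``truncations'' (of size roughly corresponding to the first few block diagonals) of the two matrices coincide modulo $p$.

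Next I would lift this comparison to a $t$-adic congruence between the characteristic polynomials $P^{(i)}_{\U_t}(X)=X^{N_i}+\lambda^{(i)}_1X^{N_i-1}+\cdots$. The key point is that the lower bound \eqref{LowerBound}, $v_t(\lambda_i)\geqslant i(j+1)+\tfrac{i(i-1)}{2}(q-1)$, grows quadratically in $i$: any minor of $M(j,N_2,q)D$ that uses a row or column of index exceeding $N_1$ carries a power of $t$ at least as large as $s_{N_1+1}=j+1+N_1(q-1)$, which is enormous compared to low-slope contributions. Combined with the Lucas congruence on entries, I expect to show that $\lambda^{(1)}_i\equiv\lambda^{(2)}_i\pmod{t^{C(n)}}$ for all $i$ in the range relevant to slopes $\leqslant\alpha$, with $C(n)\to\infty$ as $n\to\infty$. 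Invoking Lemma \ref{LemmaKo}, if two polynomials agree modulo $t^C$ in all coefficients $\lambda_i$ lying below the horizontal line $y=C$ in the Newton polygon picture, then their Newton polygon segments of slope $\leqslant\alpha$ share the same projected lengths, giving $d(\alpha,k_1)=d(\alpha,k_2)$. The role of the hypothesis $k_i>2\alpha$ is precisely to guarantee that the relevant portion of the Newton polygon sits in this ``stable'' region and is not affected by the vertex at $(N_i-r,\,v_t(\lambda_{N_i-r}))$ where $Ker(\U_t)$ starts to dominate.

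The hard part, and where the real technical work lives, is making the stability range quantitatively precise: one has to rule out that the additional columns $s_{N_1+1},\dots,s_{N_2}$ introduce spurious eigenvalues of small valuation, i.e.\ that every new coefficient $\lambda^{(2)}_i$ with $i>N_1$ has $v_t(\lambda^{(2)}_i)>\alpha\cdot i$. This is exactly the compactness/overconvergence input that replaces the classical Banach space argument and is the heart of Hattori's approach in \cite{Ha}; adapting his framework (or sharpening \eqref{LowerBound}--\eqref{UpperBound} into an effective ``off-diagonal decay'' estimate for the entries of $MD$) is the main obstacle. A direct combinatorial proof using our matrices would require showing that the Newton polygon of $P^{(2)}_{\U_t}$ above abscissa $N_1$ rises with slope strictly greater than $\alpha$, for which the quadratic growth in \eqref{LowerBound} is encouraging but not by itself conclusive, since the coefficient $\lambda_i^{(2)}$ may cancel the minimum term in \eqref{LowerBound}. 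Controlling these cancellations via Lucas' theorem, in a style analogous to Sections \ref{SecAntidiagonalNewforms} and \ref{Cor-j=0&n=j+2}, is where I would focus the bulk of the argument.
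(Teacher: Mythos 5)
The statement you are trying to prove is not proved in the paper at all: Conjecture \ref{GMConj} is stated as an open conjecture (an analogue of the Gouv\^ea--Mazur conjecture, carried over from \cite{BV2}), supported only by numerical data and by Hattori's partial result \cite[Theorem 2.10]{Ha}, which establishes a weaker/differently-normalized statement ($d(k+p^n,\alpha)=d(k,\alpha)$ under $\alpha\leqslant n$, $\alpha<k-1$) by a genuinely different and substantially more technical analysis of his ``glissando'' matrices. So there is no paper proof to compare against, and your proposal should be judged on whether it closes the conjecture on its own. It does not.

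The genuine gap is the one you yourself flag in the last paragraph, and it is not a technicality but the entire content of the conjecture. Two things are missing. First, the claimed congruence $\lambda^{(1)}_i\equiv\lambda^{(2)}_i\pmod{t^{C(n)}}$ is never established: Lucas' theorem compares individual entries of $M(j,N_1,q)$ and $M(j,N_2,q)$ modulo $p$, but the coefficients $\lambda_i$ are sums of $i\times i$ minors of $MD$, each weighted by a product $t^{s_{\iota_1}+\cdots+s_{\iota_i}}$, and you need to control which $\F_q$-coefficients $\ell_{\iota_1,\dots,\iota_i}$ survive and whether the minimal-valuation terms cancel; a mod-$p$ agreement of a top-left truncation of the matrices does not by itself give a $t$-adic congruence of characteristic polynomial coefficients with an effective, $n$-dependent modulus $C(n)$. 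Second, even granting such a congruence, you must show that the low-slope part of the Newton polygon is determined by the coefficients in the ``stable'' range, i.e.\ that the extra columns in weight $k_2$ contribute no segments of slope $\leqslant\alpha$; the quadratic lower bound \eqref{LowerBound} (equivalently Theorem \ref{ThmBoundDim}) confines slope-$\alpha$ segments to abscissae $i\leqslant 2\frac{\alpha-j-1}{q-1}+1$, which is encouraging, but the vertices of the polygon in that range can still move if cancellations in the corresponding $\lambda_i$ differ between the two weights --- which is again the first point. In short, your outline reduces the conjecture to exactly the stability statement it asserts, and the reduction is the easy half; the paper itself leaves that half open and points to \cite{Ha} and \cite{Ha2} as the only known techniques for it.
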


In \cite[Theorem 2.10]{Ha} the author, using properties of the matrix for $\U_t$ (which he defines {\em glissando matrix}) proves

\begin{thm}[Hattori]
Let $k$ and $n$ be integers satisfying $k\geqslant 2$ and $n\geqslant 0$. Let $\alpha$ be a non-negative rational number 
satisfying $\alpha\leqslant n$ and $\alpha<k-1$. Then we have $d(k+p^n,\alpha)=d(k,\alpha)$.
\end{thm}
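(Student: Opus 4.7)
The plan is to establish the equality by comparing the characteristic polynomials of $\U_t$ acting on $S^1_k(\G_1(t))$ and on $S^1_{k+p^n}(\G_1(t))$, and then invoking a Newton polygon principle. The strategy rests on the explicit description of the matrix $U = MD$ from formula \eqref{EqAt}: its entries are binomial coefficients times prescribed powers of $t$ (column $b$ carries the factor $t^{s_b} = t^{j+1+(b-1)(q-1)}$), and the binomial coefficients are controlled by Lucas's theorem (Lemma \ref{KummerThm}).

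First I would write down the matrices $U_k$ and $U_{k+p^n}$ of $\U_t$ in the bases of cocycles $\{\c_i(\e)\}$ from Section \ref{SecSet}. By \eqref{Ttcj}, the coefficient of $\c_{i+h(q-1)}(\e)$ in $\U_t(\c_i(\e))$ depends on $k$ only through binomial coefficients of the form $\binom{k-2-i-h(q-1)}{\cdot}$. Adding $p^n$ to $k$ only alters the base-$p$ digits at position $\geqslant n$ of the top entry, so Lucas's theorem yields $\binom{k+p^n-2-i-h(q-1)}{j}\equiv \binom{k-2-i-h(q-1)}{j}\pmod{p}$ whenever $j<p^n$. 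Hence the $(k-1)\times(k-1)$ submatrix of $U_{k+p^n}$ indexed by $0\leqslant i\leqslant k-2$ coincides with $U_k$ in all entries whose column index $h$ stays within the old range. The extra $p^n$ rows and columns index the new cocycles $\c_{k-1}(\e),\dots,\c_{k+p^n-2}(\e)$, and the $t$-factor $t^{s_b}$ on column $b\geqslant k-1$ forces the associated eigenvalues to have $t$-valuation at least $k-1>\alpha$.

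Next, combining this observation with control on the ``old-into-new'' entries (which all pick up a factor $t^{s_b}$ with $s_b\geqslant k>\alpha$ from the column scaling), one should obtain a congruence of characteristic polynomials
\[ P_{k+p^n}(X)\equiv X^{p^n}P_k(X)\pmod{t^N}, \]
with $N$ chosen so that $N>\alpha(\deg P_k+p^n)$. The hypothesis $\alpha\leqslant n$ is precisely what is needed to guarantee that the Lucas congruence holds throughout the range of indices that can contribute to a slope $\leqslant \alpha$, while $\alpha<k-1$ ensures that the relevant portion of the Newton polygon lies strictly below the levels produced by the new block. A standard Newton polygon argument in the spirit of Lemma \ref{LemmaKo} then forces the segments of slope $\leqslant \alpha$ in the polygons of $P_{k+p^n}$ and $X^{p^n}P_k$ to coincide, whence $d(k+p^n,\alpha)=d(k,\alpha)$.

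The main obstacle will be making the displayed congruence rigorous: one must track precisely which entries of $U_{k+p^n}$ fail to match those of $U_k$ extended by the upper-triangular new block, and verify that every such discrepancy, together with each old-into-new contribution, carries enough $t$-adic divisibility that it cannot lower any vertex of the Newton polygon of abscissa $\leqslant d(k,\alpha)$. The delicate bookkeeping lies in counting, for each product of matrix entries arising in the Leibniz expansion of the determinant, the total power of $t$ supplied by the column scalings against the ``savings'' one could extract from the binomial coefficients; here the hypothesis $\alpha\leqslant n$ is used one last time to compare $\alpha\cdot p^n$ with the valuations produced by the new columns.
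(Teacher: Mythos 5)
A preliminary remark: the paper does not prove this statement at all — it is quoted from Hattori \cite[Theorem 2.10]{Ha}, where it is established by a detailed analysis of the explicit (``glissando'') matrix of $\U_t$ together with Newton polygon estimates. So there is no in-paper proof to compare with, but your general strategy (explicit matrix from \eqref{Ttcj}/\eqref{EqCoeffMj}, Lucas' theorem, Newton polygons as in Lemma \ref{LemmaKo}) is indeed the right kind of approach. As written, however, the proposal contains an unjustified (in fact false) intermediate claim and defers exactly the step that constitutes the theorem. Lemma \ref{KummerThm} gives $\binom{a+p^n}{m}\equiv\binom{a}{m}\pmod p$ only when the \emph{lower} entry satisfies $m<p^n$; in the matrix of $\U_t$ the lower entries of the binomials attached to the entry in column $j$ are $j$ itself and $j-i$, and the column index $j$ runs up to $k-2$, which in general far exceeds $p^n$. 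Hence your assertion that the whole $(k-1)\times(k-1)$ old-range submatrix of $U_{k+p^n}$ coincides with $U_k$ does not follow: only the columns of index $<p^n$ are guaranteed to agree, and the entire difficulty is to show that the discrepancies in old columns of index $\geqslant p^n$ (each weighted by $t^{j+1}$ with $j+1>p^n\geqslant n\geqslant\alpha$) and the contributions of the new block cannot disturb the slope-$\leqslant\alpha$ initial segment of the polygon. Likewise, your statement that the ``old-into-new'' entries carry a factor $t^{s_b}$ with $s_b\geqslant k$ is wrong at the level of matrix entries (an entry lying in a new row but an old column carries only the old column's power of $t$); it becomes correct only when one argues with \emph{principal} minors, since a principal minor meeting a new index necessarily uses the corresponding new column.

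The displayed congruence $P_{k+p^n}(X)\equiv X^{p^n}P_k(X)\pmod{t^N}$ with $N>\alpha(\deg P_k+p^n)$ is both stronger than needed and false in general: the traces of the two matrices can already differ by terms of $t$-valuation roughly $p^n+1$, while your $N$ grows linearly in $k$, so no such uniform congruence can hold once $k\gg p^n$. What the Newton polygon method actually requires is a coefficient-by-coefficient estimate: for every abscissa $i$ governing the slope-$\leqslant\alpha$ part of the polygon one must show that $v_t\bigl(\lambda_i^{(k+p^n)}-\lambda_i^{(k)}\bigr)$ exceeds the height of that part at $i$ (roughly $\alpha i$), and that the points contributed by the new block lie strictly above its prolongation; this is precisely where the hypotheses $\alpha\leqslant n$ and $\alpha<k-1$ must be spent, by comparing $\alpha i$ against the valuations of the form $\min(p^n+1,k)+s_1+\cdots+s_{i-1}$ produced, inside each principal minor, by a changed or new column together with the unavoidable column weights $t^{s_\iota}$. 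You explicitly set this bookkeeping aside as ``the main obstacle'', but it is the actual content of Hattori's theorem; as it stands the proposal is a plausible programme, not a proof.
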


A closer look at the data for $\T_t$ acting on $S^1_{k,m}(GL_2(A))$ (i.e. focusing on ``old'' slopes) hints at the following 
refinement of Conjecture \ref{GMConj}. 

\begin{conj}\label{ConjGMRef}
Let the type $m$ be fixed. For any weight $k$, let $\ell(k)\in \N$ be the smallest integer such that $q^{\ell(k)}+2\geqslant k$. 
Then at weight $k':=k+(q-1)q^{\ell(k)}$ (in level 1) we find:\begin{itemize}
\item[{\bf 1.}] the old slopes at weight $k$ with exactly the same multiplicity, i.e. for any old slope $\alpha$ in weight $k$
we have $d(\alpha,k')=d(\alpha,k)$;
\item[{\bf 2.}] the slope $\frac{k}{2}$ with $d(k',\frac{k}{2})=\dim_{\C_\infty}S^{1,new}_{k,m}(\Gamma_0(t))$ 
(note that in weight $k'$ the slope $\frac{k}{2}$ is old and our previous results/conjectures predict that it is not present
among the old slopes at weight $k$). 
\end{itemize}
\end{conj}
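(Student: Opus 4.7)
The plan is to combine the explicit matrix formula \eqref{EqCoeffMj} for the action of $\U_t$ with a digit-wise analysis using Lucas' Theorem (Lemma \ref{KummerThm}), in the spirit of the proofs in Section \ref{SecSpecial} but scaled up to arbitrary dimensions. Writing $q = p^r$, the choice $q^{\ell(k)} + 2 \geq k$ forces every index $j + (i-1)(q-1)$ appearing in the block $C_j$ at weight $k$ to have its $p$-adic digits essentially disjoint from those of the increment $(q-1)q^{\ell(k)} = (p-1)p^{r\ell(k)} + \cdots + (p-1)p^{r\ell(k)+r-1}$. This ``digit separation'' should force a structured decomposition of $M(j,n',q)$, where $n' = n + q^{\ell(k)}$.

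For Part \textbf{1}, I would aim to show that the characteristic polynomial of $\U_t$ acting on $C_j$ at weight $k'$ factors (after accounting for the powers of $t$ in $D$) as $P_{\text{old}}(X) \cdot Q(X)$, where $P_{\text{old}}(X)$ recovers, up to scaling by powers of $t$ that preserve $t$-adic valuations, the old part of the characteristic polynomial at weight $k$. The strategy is to identify a $\U_t$-stable ``old slice'' inside the cocycle space at weight $k'$, using Lucas' Theorem to verify that the entries of $M(j,n',q)$ linking the original $n$ indices to the extra $q^{\ell(k)}$ indices either vanish or carry large enough powers of $t$ (via the factor $D$) that they do not affect the lower slopes. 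A Newton polygon argument in the spirit of Theorems \ref{ThmUpperBSlopes} and \ref{ThmBoundDim} then concludes. This resembles Hattori's \cite[Theorem 2.10]{Ha} but must be tightened to the specific increment $(q-1)q^{\ell(k)}$.

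For Part \textbf{2}, the slope $k/2$ must appear in level $1$, weight $k'$ with multiplicity exactly $\dim_{\C_\infty} S^{1,new}_{k,m}(\Gamma_0(t))$. Note that at level $t$, weight $k'$, the newforms carry slope $k'/2 \neq k/2$, so any cusp form of slope $k/2$ at weight $k'$ is necessarily old in the level $t$ sense, hence coming from a $\T_t$-eigenform of slope $k/2$ at level $1$, weight $k'$. My plan is to build such level-$1$ forms explicitly: given a newform $\varphi \in S^{1,new}_{k,m}(\Gamma_0(t))$ with $\U_t$-eigenvalue $\pm t^{k/2}$, multiply by an appropriate polynomial in the generators $g$ and $h$ of \eqref{GradAlg} to shift the weight from $k$ to $k'$, arrange that the resulting product has level $1$ and preserves the $\T_t$-slope. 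Together with the upper bound on multiplicities from Theorem \ref{ThmBoundDim}, injectivity of this construction would pin down $d(k/2, k')$ to the predicted value.

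The main obstacle is Part \textbf{2}: the present paper provides no direct bridge between $S^{1,new}_{k,m}(\Gamma_0(t))$ and $S^{1,m}_{k',m}(GL_2(A))$, so the multiplication approach would need to be justified carefully since shifting weight by multiplication changes both the level and (potentially) the type, and the preservation of slope is delicate. An alternative is to proceed by induction on $\ell(k)$ through a Hida-style family argument using \cite{Vi} or \cite{NR}, calibrating the precise jump via a glissando-style computation. Either way, a fully unconditional proof will likely depend on Conjecture \ref{OldConj}(\textbf{3}) and the sharp upper bound $k/2$ for slopes discussed in Remark \ref{RemTF}, both of which remain open and interconnected.
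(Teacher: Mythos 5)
What you are trying to prove is stated in the paper as Conjecture \ref{ConjGMRef}: the paper offers no proof of it, only numerical evidence (the $q=2$ slope tables, the example starting at $k_0=5$, and the remark on the optimality of $\ell(k)$), and it is explicitly presented as a refinement of the open Conjecture \ref{GMConj}, with only Hattori's theorem (increments $p^n$ under different hypotheses) proved. So there is no proof in the paper for your plan to match, and your text does not close the gap either: it is a strategy outline, and you yourself concede at the end that it would rest on Conjecture \ref{OldConj}(\textbf{3}) and on the conjectural upper bound $k/2$ of Remark \ref{RemTF}, both open.

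Beyond its conditional nature, at least one step of your plan would fail outright. In Part \textbf{2} you propose to produce $\T_t$-eigenforms in $S^1_{k',m}(GL_2(A))$ by multiplying a newform $\varphi\in S^{1,new}_{k,m}(\Gamma_0(t))$ by a polynomial in $g$ and $h$; but $g,h$ are $GL_2(A)$-modular, so the product is still only $\Gamma_0(t)$-invariant --- multiplication can only raise weight, never lower the level, so this construction cannot land in level $1$ at all, and nothing in the paper controls how slopes behave under such products. Moreover, your reduction ``any slope-$k/2$ form at weight $k'$ is old at level $t$'' presupposes the direct sum $S^1_{k',m}(\Gamma_0(t))=S^{1,old}\oplus S^{1,new}$, which is only established in the paper for small weights ($k\leqslant 5q-5$) and special block shapes. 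Finally, the ``digit separation'' and the claimed $\U_t$-stable old slice in Part \textbf{1} are exactly the missing combinatorial input: the Lucas-theorem computations in Section \ref{SecSpecial} are carried out only for $n\leqslant j+1$, $j=0$ with $n\leqslant q+2$, and $n=j+2$, and there is no verification that the linking entries between the original $n$ indices and the extra $q^{\ell(k)}$ indices vanish or are $t$-adically negligible for the specific increment $(q-1)q^{\ell(k)}$. As it stands, your proposal is a reasonable research programme pointing in the same direction as Hattori's work, but it is not a proof, consistent with the statement being left as a conjecture in the paper.
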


\noindent In general the slopes predicted by Conjecture \ref{ConjGMRef} do not describe all slopes at weight $k'$, nevertheless
the conjecture gives support to the existence of families of cusp forms and predicts where to look for them.

\begin{exe}
{\em At the web page https://sites.google.com/site/mariavalentino84/publications look at the file ``Slopes\_Tt\_q2.pdf'' for slopes for
$\T_t$ acting on $M_k(GL_2(A))$ when $q=2$. Since we are interested in cusp forms only, just ignore the largest slope in each weight 
because that one is related to the only form in the basis which is not cuspidal.

\noindent Let $k_0=5$. Then $\ell(5)=2$ and at weight $k_1=k_0+2^2=9$ we find slopes $5/2,5/2,1$. It is easy to see (e.g.
from the file ``CharPoly\_Ut\_Gamma1.pdf''), that $\dim_{\C_\infty}S^{1,new}_{5}(\Gamma_0(t))=2$.\\
Iterating, $\ell(k_1)=3$ and at weight $k_2=k_1+2^3=17$ we find slopes $9/2,9/2,5/2,5/2,1$; we also have that 
$\dim_{\C_\infty}S^{1,new}_{9}(\Gamma_0(t))=2$. Moving on note that $\ell(k_2)=4$, so we find: 
\begin{itemize}
\item[$k_3=33$] with slopes $\{17/2, 17/2, 17/2, 17/2, 17/2, 17/2, 9/2, 9/2, 5/2, 5/2, 1\} $ and $\ell(k_3)=5$;
\item[$k_4=65$] with slopes $\{33/2, 33/2, 33/2, 33/2, 33/2, 33/2, 33/2, 33/2, 33/2, 33/2, 17/2, 17/2, 17/2, 17/2, 17/2,$ \\ 
$ 17/2, 9/2, 9/2, 5/2, 5/2, 1\}$ and $\ell(k_4)=6$;
\item[$k_5=129$] with slopes  $\{65/2, 65/2, 65/2, 65/2, 65/2, 65/2, 65/2, 65/2, 65/2, 65/2, 65/2, 65/2, 65/2, 65/2, 65/2, $\\
$ 65/2, 65/2, 65/2,  65/2, 65/2, 65/2, 65/2, 33/2, 33/2, 33/2, 33/2, 33/2, 33/2, 33/2, 33/2, 33/2, 33/2,   $\\
$17/2, 17/2, 17/2, 17/2, 17/2,  17/2, 9/2, 9/2, 5/2, 5/2, 1\}$.
\end{itemize}
Finally we observe that $\dim_{\C_\infty}S^{1,new}_{17}(\Gamma_0(t))=6$, $\dim_{\C_\infty}S^{1,new}_{33}(\Gamma_0(t))=10$, 
$\dim_{\C_\infty}S^{1,new}_{65}(\Gamma_0(t))=22$.\\
Similar examples can be obtained starting from a different $k_0$, or cosidering different $q$ (odd or even) and looking at the other tables. \\
For more data one can also see Hattori's Tables \cite{Ha1}.}
\end{exe}

\begin{rem}
The exponent $\ell(k)$ in Conjecture \ref{ConjGMRef} seems to be optimal. Indeed, 
in the same setting of the previous example, consider $k_0=11$ for which we find slopes $\{5, 3, 1\}$. 
Then, $\ell(k_0)=4$ and at weight $k_1=11+2^{\ell(k_0)}=27$ we find slopes $\{ 13, 11, 11/2, 11/2, 11/2, 11/2, 5, 3, 1\}$.   
At weight $19=11+2^{\ell(k_0)-1}$ we find slopes $\{9, 11/2, 11/2, 5, 3, 1\}$, but $11/2$ does not show up with the predicted 
multiplicity, indeed $\dim_{\C_\infty}S^{1,new}_{11}(\Gamma_0(t))=4$. Another example: with $m=0$ and $q=3$ (file 
 {\rm ``Slopes\_Tt\_q3\_type0.pdf''}) take $k_0=8$ with $\ell(k_0)=2$; we have the slope $4$ with multiplicity $1$ at weight 
$k_1=k_0+2\cdot 3^{\ell(k_0)}=26$, while the slope $4$ does not appear in weight $k_0+2\cdot 3^{\ell(k_0)-1}=14$. Then the slope
$\frac{k_1}{2}=13$ appears with multiplicity $6$ at weight $80=k_1+2\cdot 3^{\ell(k_1)}$ and is not present at weight $44$
(it appears at weights $38=26+2^2\cdot 3$ and $62=26+2^2\cdot 3^2$ but with the ``wrong'' multiplicity $2$).
\end{rem}


\begin{thebibliography}{99}

\bibitem{BV1} {\sc A. Bandini, M. Valentino}, 
{\em On the Atkin $U_t$-operator for $\Gamma_1(t)$-invariant Drinfeld cusp forms}, Int. J. Number Theory 14 (2018), no. 10, 2599--2616.

\bibitem{BV2} {\sc A. Bandini, M. Valentino} {\em  On the Atkin $U_t$-operator for $\Gamma_0(t)$-invariant Drinfeld cusp forms}, 
Proc. Amer. Math. Soc. {\bf 147} (2019), no. 10, 4171--4187.

\bibitem{B} {\sc G. B\"ockle} {\em An Eichler-Shimura isomorphism over function fields between Drinfeld
modular forms and cohomology classes of crystals}, avaliable at\\
\href{www1.iwr.uni-heidelberg.de/groups/arith-geom/home/members/gebhard-boeckle/publications/}{www1.iwr.uni-heidelberg.de/groups/arith-geom/home/members/gebhard-boeckle/publications/}.


\bibitem{Bu} {\sc K. Buzzard} {\em Families of modular forms}, Journal de Th\'eorie des Nombres de Bordeaux, Vol 13 Fasc. 1 (2001),43--52.

\bibitem{BC} {\sc K. Buzzard, F. Calegari}, {\em A counterexample to the Gouv\^ea-Mazur conjecture}, C.R. Acad. Sci. Paris, Ser. I 338 (2004), 751--753.

\bibitem{Co} {\sc R. Coleman}, {\em Classical and overconvergent modular forms}, Invent. Math. {\bf 124} (1996), 215--241.

\bibitem{CE} {\sc R. Coleman, B. Edixhoven} {\em On the semi-simplicity of the $U_p$-operator on modular forms},
Math. Ann.  310  (1998),  no. 1, 119--127.

\bibitem{Cor} {\sc G. Cornelissen} {\em A survey of Drinfeld modular forms},
in \cite{GPRV}, 167--187.

\bibitem{DW} {\sc K.S. David, W.A. Webb} {\em Lucas' Theorem for prime powers},
 Europ. J. Combinatorics {\bf 11} (1990), 188--196.
 
 \bibitem{FvdP} {\sc J. Fresnel - M. van der Put}, 
{\em G\'eom\'etrie Analytique Rigide et Applications}, 
Progress in Mathematics {\bf 18}, (Birkh\"auser, 1981).

\bibitem{G1} {\sc E.U. Gekeler} {\em Drinfeld Modular Curves}, Lecture Notes in Mathematics {\bf 1231}, (Springer-Verlag, 1980).

\bibitem{G2} {\sc E.U. Gekeler} {\em On the coefficients of Drinfeld modular forms},
{ Invent. math.} {\bf 93} (1988), 667--700.

\bibitem{G3} {\sc E.U. Gekeler} {\em Improper Eisenstein series on Bruhat-Tits trees},
{ Manuscripta Math.} {\bf 86} (1995), 367--391.

\bibitem{G4} {\sc E.U. Gekeler} {\em On the Drinfeld discriminant function},
{ Comp. Math.} {\bf 106} (1997), 181--202.

\bibitem{GN} {\sc E.U. Gekeler - U. Nonnengardt}, {\em Fundamental domains of some arithmetic groups over function fields},
{ Internat. J. Math.} {\bf 6} (1995), no. 5, 689--708.

 \bibitem{GPRV} {\sc Ed. E.U. Gekeler, M. van der Put, M. Reversat, J. van Geel}
{\em Drinfeld modules, modular schemes and applications}, Proceedings of the
workshop held in Alden-Biesen, September 9--14, 1996 (World Scientific
Publishing Co., Inc., River Edge, NJ, 1997).

\bibitem{Go1} {\sc D. Goss}, {\em Modular forms for $\F_r[T]$}, J. reine Angew. Mat. {\bf 31} (1980), 16--39.

\bibitem{Go2} {\sc D. Goss}, {\em $\overline{\pi}$-adic Eisenstein series for function fields}, Compositio Math. {\bf 41} (1980), 3--38. 

\bibitem{Go3} {\sc D. Goss} {\em A construction of $\mathfrak{v}$-adic modular forms}, J. Number Theory {\bf 136} (2014), 330--338.

\bibitem{GM1} {\sc F. Gouv\^ea, B. Mazur} {\em Families of modular eigenforms},
Math. Comput. {\bf 58} (1992), no. 198, 793--805.

\bibitem{Gr} {\sc A. Granville} {\em Arithmetic Properties of Binomial Coefficients I: Binomial coefficients modulo prime powers},
Canadian Mathematical Society Conference Proceedings {\bf 20} (1997), 253--275.

\bibitem{Ha1} {\sc S. Hattori}, {\em Table of $t$-adic slopes on Drinfeld modular forms}, available at
 \href{http://www.comm.tcu.ac.jp/~shinh/}{http://www.comm.tcu.ac.jp/~shinh/}.
 
\bibitem{Ha} {\sc S. Hattori} {\em Dimension variation of Gouv\^ea-Mazur type for Drinfeld cusp forms of level $\G_1(t)$},
to appear in Int. Math. Res. Not., https://doi.org/10.1093/imrn/rnz104

\bibitem{Ha2} {\sc S. Hattori} {\em $\wp$-adic continuous families of Drinfeld eigenforms of finite slope}, arXiv:1904.08618 [math.NT] (2019).

\bibitem{H1} {\sc H. Hida}, {\em Galois representations into $GL_2(\Z_p\llbracket X\rrbracket)$ attached to ordinary cusp forms},
Invent. Math.  85  (1986),  no. 3, 545--613.

\bibitem{H2} {\sc H. Hida}, {\em Iwasawa modules attached to congruences of cusp forms},
 Ann. Sci. \'Ecole Norm. Sup. (4) {\bf 19} (1986), no. 2, 231--273.

\bibitem{HM} {\sc H. Hida, Y. Maeda} {\em Non-abelian base change for totally real fields}, Pacific J. Math. 1997, 
Special Issue, 189--217. Olga Taussky-Todd: in memoriam. 

\bibitem{Ko} {\sc N. Koblitz} {\em $p$-adic numbers, $p$-adic analysis, and zeta-functions}, 
{\bf GTM 58}, (Springer-Verlag, 1984).

\bibitem{NR} {\sc M.-H. Nicole, G. Rosso}, {\em Familles de formes modulaires de Drinfeld pour le groupe g\'en\'eral lin\'eaire},
arXiv:1805.08793v3 [math.NT] (2019).

\bibitem{Pe} {\sc A. Petrov} {\em $A$-expansion of Drinfeld modular forms},
J. Number Theory {\bf 133} (2013), 2247--2266.

\bibitem{Se1} {\sc J.-P. Serre}, {\em Formes modulaires et fonctions zeta p-adiques}, Modular forms in one
variable III, Lecture Notes in Mathematics, vol. {\bf 350}, Springer Verlag, 1973.

\bibitem{S1} {\sc J.P. Serre} {\em Trees}, (Springer-Verlag, 1980).


\bibitem{T} {\sc J.T. Teitelbaum} {\em The Poisson kernel for Drinfeld modular curves},
J. Amer. Math. Soc. {\bf 4} (1991), no. 3, 491--511.

\bibitem{Vi} {\sc C. Vincent} {\em On the trace and norm maps from $\Gamma_0(\mathfrak{m})$ to $GL_2(A)$},
J. Number Theory {\bf 142} (2014), 18--43.

\bibitem{W} Wolfram Research, Inc., Mathematica, Version 10.0, Champaign, IL (2014).

\end{thebibliography}
\end{document}